\newtheorem{thm}{Theorem}[section]
\newproof{proof}{Proof}
\newtheorem{prop}[thm]{Proposition}
\newtheorem{lem}[thm]{Lemma}
\newtheorem{cor}[thm]{Corollary}
\newtheorem{rem}[thm]{Remark}
\newproof{pf}{Proof}
\newcommand{\mbb}{\mathbb}
\newcommand{\de}{\delta}
\newcommand{\ov}{\overline}
\newcommand{\La}{\Lambda}
\newcommand{\pa}{\partial}
\newcommand{\mf}{\mathbb}
\newcommand{\Om}{\Omega}
\newcommand{\al}{\alpha}
\newcommand{\be}{\beta}
\newcommand{\ga}{\gamma}
\newcommand{\z}{\zeta}
\newcommand{\la}{\lambda}
\newcommand{\ti}{\tilde}
\newcommand{\De}{\Delta}
\renewcommand{\Re}{\operatorname{Re}}
\renewcommand{\Im}{\operatorname{Im}}
\newcommand{\supp}{\operatorname{supp}}
\newcommand{\Ric}{\operatorname{Ric}}
\newcommand{\diag}{\operatorname{diag}}
\newcommand{\ad}{\operatorname{ad}}
\newcommand{\proofofref}{}
\newproof{zproofof}{Proof of \proofofref}
\newenvironment{proofof}[1]
 {\renewcommand{\proofofref}{#1}\zproofof}
 {\endzproofof}
\numberwithin{equation}{section}
\begin{document}
\begin{frontmatter}
\title{Some remarks on the Kobayashi--Fuks metric on strongly pseudoconvex domains}

\author{Diganta Borah}
\address{Indian Institute of Science Education and Research, Pune, India}
\ead{dborah@iiserpune.ac.in}

\author{Debaprasanna Kar\fnref{myfootnote}}
\address{Indian Institute of Science Education and Research, Pune, India}
\ead{debaprasanna.kar@students.iiserpune.ac.in}
\fntext[myfootnote]{The second author was supported by the Council of Scientific \& Industrial Research (File no.
09/936(0221)/2019-EMR-I) and by the Ph.D. program at the Indian Institute of Science Education
and Research, Pune.}


\begin{abstract}
The Ricci curvature of the Bergman metric on a bounded domain $D\subset \mathbb{C}^n$ is strictly bounded above by $n+1$ and consequently $\log (K_D^{n+1}g_{B,D})$, where $K_D$ is the Bergman kernel for $D$ on the diagonal and $g_{B, D}$  is the Riemannian volume element of the Bergman metric on $D$, is the potential for a K\"ahler metric on $D$ known as the Kobayashi--Fuks metric. In this note we study the localization of this metric near holomorphic peak points and also show that this metric shares several properties with the Bergman metric on strongly pseudoconvex domains.
\end{abstract}

\begin{keyword}
Kobayashi--Fuks metric \sep Bergman kernel
\MSC[2020] 32F45 \sep 32A36 \sep 32A25
\end{keyword}

\end{frontmatter}



\section{Introduction}
For a bounded domain $D\subset\mf{C}^n$ the space
\[
A^2(D)=\left\{\text{$f: D \to \mf{C}$ holomorphic and $\|f\|^2_D:=\int_{D} \vert f\vert^2 \, dV< \infty$} \right\},
\]
where $dV$ is the Lebesgue measure on $\mf{C}^n$ is a closed subspace of $L^2(D)$ and is a reproducing kernel Hilbert space. The associated reproducing kernel denoted by $K_{D}(z,w)$ is uniquely determined by the following properties: $K_{D}(\cdot, w) \in A^2(D)$ for each $w \in D$, it is anti-symmetric, i.e., $K_{D}(z,w)=\ov{K_{D}(w,z)}$, and it reproduces $A^2(D)$:
\[
f(w)=\int_{D} f(z) \ov{K_{D}(z,w)} \, dV(z), \quad f \in A^2(D).
\]
It also follows that for any complete orthonormal basis $\{\phi_k\}$ of $A^2(D)$,
\[
K_{D}(z,w)=\sum_{k} \phi_k(z) \ov{\phi_{k}(w)},
\]
where the series converges uniformly on compact subsets of $D \times D$. The reproducing kernel $K_D(z,w)$ is called the Bergman kernel for $D$. Denote by $K_D(z)=K_D(z,z)$ its restriction to the diagonal. It is known that $\log K_D$ is a strongly plurisubharmonic function
and thus is a potential for a K\"ahler metric which is called the Bergman metric for $D$ and is given by
\[
ds^2_{B,D}=\sum_{\al,\be=1}^n g^{B,D}_{\al\ov \be} (z) \, dz_{\al}d\ov z_{\be},
\]
where
\[
g^{B,D}_{\al \ov \be}(z)=\frac{\pa^2 \log K_{D}}{\pa z_{\al} \pa \ov z_{\be}}(z).
\]
Let
\[
G_{B,D}(z)=\begin{pmatrix}g^{B,D}_{\al \ov \be}(z)\end{pmatrix}_{n \times n} \quad \text{and} \quad  g_{B,D}(z)=\det G_{B,D}(z).
\]
The components of the Ricci tensor of $ds^2_{B,D}$ are defined by
\begin{equation}\label{Ric-h-ten}
\Ric_{\al \ov \be}^{B,D}(z)=  - \frac{\pa^2 \log g_{B,D}}{\pa z_{\al} \pa \ov z_{\be}}(z),
\end{equation}
and the Ricci curvature of $ds^2_{B,D}$ is given by
\begin{equation}\label{Ric_h-curv}
\text{Ric}_{B,D}(z,u)=\frac{\sum_{\al, \be=1}^n \Ric_{\al \ov \be}^{B,D}(z) u^{\al} \ov u^{\be}}{\sum_{\al,\be=1}^n g^{B,D}_{\al\ov \be}(z)u^\al \ov u^\be}.
\end{equation}
Kobayashi \cite{Kob59} showed that the Ricci curvature of the Bergman metric on a bounded domain in $\mf C^n$ is strictly bounded above by $n+1$ and hence the matrix
\[
G_{\ti B,D}(z)=\begin{pmatrix}g^{\ti B,D}_{\al \ov \be}(z)\end{pmatrix}_{n \times n}
\]
where
\[
g^{\ti B,D}_{\al \ov \be}(z)=(n+1)g^{B,D}_{\al\ov \be}(z)-\Ric^{B,D}_{\al\ov \be}(z) = \frac{\pa^2 \log (K_D^{n+1}g_{B,D})}{\pa z_{\al} \pa \ov z_{\be}}(z),
\]
is positive definite (see also Fuks \cite{Fuks66}). Therefore,
\[
ds^2_{\ti B,D}=\sum_{\al,\be=1}^n g^{\ti B,D}_{\al \ov \be}(z)\,dz_{\al}d\ov z_{\be}
\]
is a K\"ahler metric with K\"ahler potential $\log (K_D^{n+1} g_{B,D})$. Moreover, if $F: D \to D'$ is a biholomorphism, then
\begin{equation}\label{tr-kf}
G_{\ti B, D}(z)= F'(z)^t\, G_{\ti B, D'}\big(F(z)\big) \ov F'(z),
\end{equation}
where $F'(z)$ is the Jacobian matrix of $F$ at $z$. This implies that $ds^2_{\ti B,D}$ is an invariant metric. We will call this metric the Kobayashi--Fuks metric on $D$.

The boundary asymptotics of the Bergman metric and its Ricci curvature on strongly pseudoconvex domains are known from which it turns out that the Kobayashi--Fuks metric is complete on such domains (to be seen later in Section~5). Dinew \cite{Dinew13} showed that on any bounded hyperconvex domain the Kobayashi--Fuks metric is complete, and hence in particular, by a result of Demailly \cite{Dem87}, this metric is complete on any bounded pseudoconvex domain with Lipschitz boundary. Dinew \cite{Dinew11} also observed that the Kobayashi--Fuks metric is useful in the study of Bergman representative coordinates. Invariant metrics play an important role in understanding the geometry of a domain which makes their study of natural interest in complex analysis and the purpose of this note is to show that the Kobayashi--Fuks metric shares several properties with the Bergman metric. Let us fix some notations before we state our results. We will denote by $h$ any of $B$ or $\ti B$. We write
\[
G_{h,D}(z)=\begin{pmatrix} g^{h,D}_{\al \ov \be}(z)\end{pmatrix}_{n \times n} \quad \text{and} \quad g_{h,D}(z)=\det G_{h,D}(z).
\]
The length of a vector $u$ at a point $z \in D$ in $ds^2_{h,D}$ will be denoted by $\tau_{h,D}(z,u)$, i.e.,
\[
\tau_{h,D}^2(z,u)=\sum_{\al,\be=1}^n g^{h,D}_{\al \ov \be}(z)u^{\al} \ov u^{\be}.
\]
The holomorphic sectional curvature of $ds^2_{h,D}$ is defined by
\begin{equation}\label{hsc-h}
R_{h,D}(z,u)=\frac{\sum_{\al,\be,\ga,\de=1}^n R^{h,D}_{\ov \al \be \ga \ov \de}(z)\ov u^{\al} u^{\be} u^{\ga} \ov u^{\de}}{\big(\sum_{\al,\be=1}^n g^{h,D}_{\al\ov \be}(z)u^\al \ov u^\be\big)^2},
\end{equation}
where
\begin{equation}
R^{h,D}_{\ov \al \be \ga \ov \de}(z)=-\frac{\pa^2 g^{h,D}_{\be \ov \al}}{\pa z_{\ga} \pa \ov z_{\de}}(z)+\sum_{\mu,\nu} g_{h,D}^{\nu \ov \mu}(z)\frac{\pa g^{h,D}_{\be \ov \mu}}{\pa z_{\ga}}(z)\frac{\pa g^{h,D}_{\nu \ov \al}}{\pa \ov z_{\de}}(z),
\end{equation}
$g_{h,D}^{\nu \ov \mu}(z)$ being the $(\nu,\mu)$th entry of the inverse of the matrix $G_{h,D}(z)$.  The Ricci curvature of $ds^2_{h,D}$ is defined by \eqref{Ric_h-curv} with $B$ replaced by $h$. Finally, note that in dimension one, the metric $ds^2_{h,D}$ has the form
\[
ds^2_{h,D}=g_{h,D}(z) \vert dz \vert^2, \quad \tau_{h, D}(z,u)=\sqrt{g_{h,D}(z)}\vert u\vert
\]
and both the holomorphic sectional curvature and the Ricci curvature at a point are independent of the tangent vector $u$ and are simply the Gaussian curvature
\[
R_{h,D}(z)=-\frac{1}{g_{h,D}(z)}\frac{\pa^2 \log g_{h,D}}{\pa z \pa \ov z}(z).
\]

Our first result is on the localization of the Kobayashi--Fuks metric near holomorphic peak points.
\begin{thm}\label{loc}
Let $D \subset \mf{C}^n$ be a bounded pseudoconvex domain with a holomorphic peak point $p^0 \in \pa D$. If $U$ is a sufficiently small neighborhood of $p^0$, then
\begin{itemize}
\item [{\em(I)}] $\lim_{z \to p^0} \frac{\tau_{\ti B,D}(z,u)}{\tau_{\ti B,U \cap D}(z,u)}=1$ uniformly in unit vectors $u \in \mf{C}^n$.
\item [{\em(II)}] $\lim_{z \to p^0} \frac{g_{\ti B, D}(z)}{g_{\ti B, U\cap D}(z)}=1$.
\item [{\em(III)}] If $n=1$, then $\lim_{z \to p^0} \frac{2-R_{\ti B, D}(z)}{2-R_{\ti B, U\cap D}(z)}=1$.
\end{itemize}
\end{thm}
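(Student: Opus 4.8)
The plan is to reduce all three assertions to one localization statement for the Bergman kernel together with a fixed finite number of its derivatives, and then to prove that statement by the familiar cut-off/$\ov\pa$-estimate technique, now anchored at the holomorphic peak point $p^0$.

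\emph{Reduction.} For $z\in D$ and $k\ge 0$ let $\mathcal G^k_D(z)$ denote the Gram matrix, with respect to the inner product of $A^2(D)$, of the jet functionals $f\mapsto\pa^a f(z)$, $|a|\le k$. A Schur-complement identity gives $K_D^{n+1}g_{B,D}=\det\mathcal G^1_D$, so the K\"ahler potential of $ds^2_{\ti B,D}$ is $\La_D:=\log(K_D^{n+1}g_{B,D})=\log\det\mathcal G^1_D=\log\|\om_D\|^2$, where $\om_D(z)=e^D_z\wedge\pa_1 e^D_z\wedge\cdots\wedge\pa_n e^D_z$ is the holomorphic curve in $\wedge^{n+1}A^2(D)^\ast$ osculating the Bergman curve $z\mapsto e^D_z$ (the evaluation functionals). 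Thus $ds^2_{\ti B,D}$ is the pull-back of the Fubini--Study metric under $z\mapsto[\om_D(z)]$, and
\[
\tau^2_{\ti B,D}(z,u)=\pa_u\pa_{\ov u}\log\|\om_D(z)\|^2=\frac{\|\om_D(z)\|^2\,\|\pa_u\om_D(z)\|^2-|\langle\pa_u\om_D(z),\om_D(z)\rangle|^2}{\|\om_D(z)\|^4},
\]
each factor on the right being a polynomial in the entries of $\mathcal G^2_D(z)$ (and $u$). Since the Hermitian form $u\mapsto\tau^2_{\ti B,D}(z,u)$ determines $G_{\ti B,D}(z)$, (II) will follow from (I): if two positive Hermitian forms are uniformly close on the unit sphere, the ratio of their determinants tends to $1$. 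In dimension one, $g_{\ti B,D}=g_{B,D}(2-R_{B,D})$ and $2-R_{\ti B,D}=(2g_{\ti B,D}+\pa\ov\pa\log g_{\ti B,D})/g_{\ti B,D}$ are rational in $\mathcal G^3_D(z)$, so (III) will come from the same machinery applied one order higher.

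\emph{The localization lemma.} For a sufficiently small neighborhood $U$ of $p^0$ and every $k$ I claim
\[
\mathcal G^k_{U\cap D}(z)^{-1/2}\,\mathcal G^k_D(z)\,\mathcal G^k_{U\cap D}(z)^{-1/2}\longrightarrow I\qquad(z\to p^0)
\]
(the cases $k=0,1$ recover the classical localization of the Bergman kernel and metric). Restriction of functions from $D$ to $U\cap D$ does not increase norms, whence $\mathcal G^k_D(z)\le\mathcal G^k_{U\cap D}(z)$; as these are positive definite, the eigenvalues of the displayed product lie in $[0,1]$, and it suffices to prove $\det\mathcal G^k_D(z)/\det\mathcal G^k_{U\cap D}(z)\to1$. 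For this I use the extremal description
\[
\det\mathcal G^k_D(z)=\sup\Big\{\,|\det(\pa^a f_j(z))_{|a|\le k,\,j}|^2\big/\det(\langle f_i,f_j\rangle_D)\ :\ f_1,\dots,f_{N_k}\in A^2(D)\,\Big\},
\]
$N_k$ the number of multi-indices of length $\le k$; the inequality $\le$ is the restriction estimate. For the reverse, fix a holomorphic peak function $\chi$ for $D$ at $p^0$, choose a tuple $(g_j)$ orthonormal in $A^2(U\cap D)$ and within $\ep$ of optimal for $U\cap D$, set $h_j=\chi^M g_j$, and fix a cut-off $\psi\equiv1$ near $p^0$ with $\supp\psi\subset U$. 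Multiplication by $\chi^M$ multiplies the jet-determinant by $\chi(z)^{MN_k}\to1$ and does not enlarge the Gram matrix, while $\supp\ov\pa\psi\Subset U\setminus\{p^0\}$ forces $\int_{\supp\ov\pa\psi}|h_j|^2\le(1-\de)^{2M}$. Solving $\ov\pa v_j=h_j\,\ov\pa\psi$ on the pseudoconvex domain $D$ with the singular weight $2L\log|w-z|+C|w|^2$, $L\ge n+k$, yields $v_j\in A^2(D)$ vanishing to order $>k$ at $z$ with $\|v_j\|_D\le C(1-\de)^M$; hence $\ti h_j:=\psi h_j-v_j\in A^2(D)$ has the same $k$-jet at $z$ as $h_j$, and the Gram matrix of $(\ti h_j)$ differs from that of $(\psi h_j)$ --- which is $\le I$ --- by $O((1-\de)^M)$. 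Combining, $\det\mathcal G^k_D(z)\ge|\chi(z)|^{2MN_k}(1-\ep)(1+C(1-\de)^M)^{-1}\det\mathcal G^k_{U\cap D}(z)$; letting $z\to p^0$, then $M\to\infty$, then $\ep\to0$ gives the lemma.

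\emph{Conclusion, and the hard point.} The convergence above is inherited, uniformly in the unit vector $u$, by every ``compression'' $L^\ast\mathcal G^2_D(z)L$ --- the map $T\mapsto(S^\ast S)^{-1/2}S^\ast T S(S^\ast S)^{-1/2}$ with $S=\mathcal G^2_{U\cap D}(z)^{1/2}L$ has operator norm $\le\|T\|$ --- and, by L\"owner-monotonicity of Schur complements, by the Schur complement of any principal block of such a compression; moreover ratios of the associated determinants tend to $1$. Feeding this into the formula for $\tau^2_{\ti B,D}$ above gives (I), and then (II) and (III) follow as explained. The delicate step is precisely this last one: as $z\to p^0$ the entries of $\mathcal G^k$ blow up at very different rates along the different jet directions, so passing from ``$\mathcal G^k_D$ and $\mathcal G^k_{U\cap D}$ asymptotically proportional'' to ``the specific rational invariants $\tau^2_{\ti B}$, $g_{\ti B}$, $R_{\ti B}$ have ratio $\to1$'' uses the transformation law \eqref{tr-kf} in an essential way --- biholomorphic invariance of $ds^2_{\ti B,D}$ is what guarantees that these are the ``correct'', homogeneous combinations of the Gram data, on which the rescaled estimate of the lemma can be pushed through factor by factor. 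A subsidiary technical point is to guarantee, through the singular weight in H\"ormander's estimate, that the correction $v_j$ has vanishing $k$-jet at the moving point $z$ while its norm is controlled by $\int_{\supp\ov\pa\psi}|h_j|^2$ with a constant independent of $z$ and $M$.
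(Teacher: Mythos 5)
For (I) and (II) your route is genuinely different from the paper's and, I believe, completable. The paper writes $\tau^2_{\ti B,D}(z,u)=I_D(z,u)/K_D(z)$ (Proposition~\ref{F_metric}) with $I_D$ the Krantz--Yu maximal function and simply quotes the known localizations of $K_D$ and $I_D$; for (II) it diagonalizes $G_{\ti B,D}$ and $G_{\ti B,U\cap D}$ simultaneously (Lemma~\ref{S-D}) and applies (I) direction by direction. Your jet--Gram localization lemma is a legitimate generalization of exactly the cut-off/H\"ormander technique the paper itself uses for $I''_D$ in Proposition~\ref{loc-I} (the restriction monotonicity, the triangular action of $\chi^M$ on jets, and the order of limits $z\to p^0$, $M\to\infty$, $\ep\to 0$ are all sound), and once you have the two-sided Loewner sandwich $(1-\ep_z)\,\mathcal G^2_{U\cap D}(z)\le\mathcal G^2_D(z)\le\mathcal G^2_{U\cap D}(z)$, the passage to (I) can be made rigorous: $\tau^2_{\ti B,D}(z,u)$ is a quotient of a $2\times2$ Gram determinant of fixed wedge vectors (built universally from the jet functionals and $u$) by $\|\om_D\|^4$, each factor monotone in the underlying inner product, so compound-matrix/Schur-complement monotonicity yields ratio bounds $(1-\ep_z)^{\pm C_n}$ uniformly in $u$, and (II) follows as you say. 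But you must actually write this step out --- ``feeding this into the formula'' is where the content sits, and your text leaves it at the level of assertion, with an irrelevant appeal to the transformation law \eqref{tr-kf}.

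The genuine gap is (III). Two-sided multiplicative closeness of $\mathcal G^3_D$ and $\mathcal G^3_{U\cap D}$ controls only quantities that are built monotonically from the Gram data; it does not control ratios of arbitrary rational expressions in the entries, because such expressions involve cancellations between terms that individually blow up at very different rates --- and $2-R_{\ti B,D}=\big(2g_{\ti B,D}+\pa\ov\pa\log g_{\ti B,D}\big)/g_{\ti B,D}$ is precisely such an expression. What is needed is a cancellation-free, extremal representation of $2-R_{\ti B,D}$, and producing it is the hard content of the paper: Proposition~\ref{Sec curv n=1} shows $2-R_{\ti B,D}=I'_D/K_D+I''_D/I'_D$ for two new maximal domain functions, via a normal-coordinates computation whose crux is that the dangerous terms assemble into the perfect square $\vert h_0\psi'''+h_0'\phi'-2h_0\phi''\vert^2$, which vanishes by the normal-coordinate relations; the paper then localizes $I'_D,I''_D$ (Proposition~\ref{loc-I}, using Proposition~\ref{Monotone}(b) and part (II)) and concludes with Lemma~\ref{seq}. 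Your proposal replaces all of this by the remark that the curvature is ``rational in $\mathcal G^3_D$'' and that biholomorphic invariance guarantees these are the ``correct'' combinations; invariance guarantees no such positivity or monotonicity structure, so as written (III) is not proved.
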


A crucial step in the proof of this theorem is to obtain Bergman--Fuks type results for the Kobayashi--Fuks metric and its related invariants, i.e., to express them in terms of certain maximal domain functions. For the holomorphic sectional curvature of the Kobayashi--Fuks metric, we derive such a result only in dimension one, though we believe that in higher dimensions also, an analog of this and hence of (III) above should hold.

Next we investigate the boundary behavior of the Kobayashi--Fuks metric on strongly pseudoconvex domains in $\mf{C}^n$. We will denote by $\de_D(z)$ the Euclidean distance from the point $z\in D$ to the boundary $\pa D$. For $z$ close to the boundary $\pa D$, let $\pi(z)\in \pa D$ be the nearest point to $z$, i.e., $\de_D(z)=|z-\pi(z)|$, and for a tangent vector $u\in \mf C^n$ based at $z$, let $u=u_H(z)+u_N(z)$ be the decomposition along the tangential and normal directions respectively at $\pi(z)$.

\begin{thm}\label{bdy-n}
Let $D \subset \mf{C}^n$ be a $C^2$-smoothly bounded strongly pseudoconvex domain and $p^0 \in \pa D$. Then there are holomorphic coordinates $z$ near $p^0$ in which
\begin{itemize}
\item [\em{(I)}] $\de_{D}(z)\, \tau_{\ti B,D}(z,u)\to \frac{1}{2}\sqrt{(n+1)(n+2)}\,\vert u_N(p^0)\vert$\,,
\item [\em{(II)}] $\sqrt{\de_{D}(z)}\, \tau_{\ti B, D}\big(z,u_{H}(z)\big)\to \sqrt{\frac{1}{2}(n+1)(n+2)\mathcal{L}_{\pa D}\big(p^0,u_H(p^0)\big)}$\,, 
\item [\em{(III)}] $\de_D(z)^{n+1} g_{\ti B,D}(z) \to \frac{(n+1)^n(n+2)^n}{2^{n+1}}$,
\item [\em{(IV)}] If $n=1$, then $R_{\ti B, D}(z) \to -\frac{1}{3}$,
\end{itemize}
as $z \to p^0$. Here, $\mathcal{L}_{\pa D}$ is the Levi form of $\pa D$ with respect to some defining function for $D$.
\end{thm}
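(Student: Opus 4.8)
The plan is to reduce everything to the known boundary asymptotics of the Bergman kernel and the Bergman metric on strongly pseudoconvex domains, since by construction $g^{\ti B,D}_{\al\ov\be} = (n+1)g^{B,D}_{\al\ov\be} - \Ric^{B,D}_{\al\ov\be}$. First I would invoke Fefferman's asymptotic expansion of the Bergman kernel together with the Diederich/Fefferman-type expansions for the Bergman metric: there are holomorphic coordinates $z$ near $p^0$ (essentially those coming from the local defining function normalized so that the Levi form is the identity at $p^0$) in which $K_D(z) \sim c\,\de_D(z)^{-(n+1)}$ with an explicit constant, and in which the Bergman metric tensor behaves like $g^{B,D}_{n\ov n}(z)\sim \tfrac{n+1}{4}\de_D(z)^{-2}$ in the complex normal direction and like $g^{B,D}_{\al\ov\be}(z)\sim \tfrac{n+1}{2}\mathcal L_{\pa D}(p^0,\cdot)\,\de_D(z)^{-1}$ in the complex tangential directions, with appropriate cross terms of lower order. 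Next, I would differentiate $\log g_{B,D} = \log\det G_{B,D}$ to get the asymptotics of $\Ric^{B,D}_{\al\ov\be}$; the key point is that $\det G_{B,D}(z)\sim C\,\de_D(z)^{-(n+1)}$ (this is exactly the classical result that $\de_D^{n+1}g_{B,D}\to$ const), so $\log g_{B,D} = -(n+1)\log\de_D + O(\text{lower order})$, and hence to leading order $\Ric^{B,D}_{\al\ov\be}(z)\sim -(n+1)\,\partial_\al\partial_{\ov\be}\log\de_D(z) \sim -(n+1)\,g^{B,D}_{\al\ov\be}(z)/(\text{const})$ — i.e. $\Ric^{B,D}_{\al\ov\be}$ has the same order of blow-up as $g^{B,D}_{\al\ov\be}$ but with a computable proportionality. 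Combining these, $g^{\ti B,D}_{\al\ov\be}$ is again comparable to $g^{B,D}_{\al\ov\be}$ with an explicit scalar factor, and the constants $\tfrac12\sqrt{(n+1)(n+2)}$, etc., should drop out after the arithmetic: in the normal direction one gets a factor $(n+1) + 1 = n+2$ multiplying $g^{B,D}_{n\ov n}$, giving $g^{\ti B,D}_{n\ov n}\sim \tfrac{(n+1)(n+2)}{4}\de_D^{-2}$, whence $\de_D\,\tau_{\ti B,D}(z,u_N)\to \tfrac12\sqrt{(n+1)(n+2)}|u_N(p^0)|$, and similarly in the tangential directions one picks up the factor $\tfrac{(n+1)(n+2)}{2}$, while (III) follows by taking determinants: $\de_D^{n+1}g_{\ti B,D} = \de_D^{n+1}\det G_{\ti B,D}\to \tfrac{(n+1)(n+2)}{4}\cdot\big(\tfrac{(n+1)(n+2)}{2}\big)^{n-1} = \tfrac{(n+1)^n(n+2)^n}{2^{n+1}}$.

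For part (IV), in dimension one everything is scalar: $g_{\ti B,D} = (n+1)g_{B,D} - \Ric^{B,D} = 2g_{B,D} + \partial\bar\partial\log g_{B,D}$, and $R_{\ti B,D} = -g_{\ti B,D}^{-1}\,\partial\bar\partial\log g_{\ti B,D}$. The plan is to use the boundary expansion of the Bergman metric $g_{B,D}(z)$ on a planar strongly pseudoconvex (i.e.\ just smoothly bounded) domain — $g_{B,D}(z) = \tfrac{\pi}{2}\de_D(z)^{-2}\big(1 + a_1\de_D(z) + o(\de_D(z))\big)$ for suitable constants after normalization, with the well-known fact that $R_{B,D}(z)\to -2$ — and then propagate this through the two logarithmic derivatives. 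One clean way to organize the computation is to work in the coordinate where $-\log\de_D$ is (up to lower order) the Kähler potential of $g_{B,D}$, expand $\log g_{\ti B,D} = \log\big(2g_{B,D} + \partial\bar\partial\log g_{B,D}\big)$ to the order needed, and track only the terms that survive in the ratio $\partial\bar\partial\log g_{\ti B,D}/g_{\ti B,D}$ as $z\to p^0$; the $-1/3$ should emerge from the combination $2 - R_{\ti B,D}\to 2 + 1/3$ being governed by the same third-order jet of the defining function that gives $2 - R_{B,D}\to 0$, consistent with the localization statement (III) of Theorem~\ref{loc}.

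The main obstacle I anticipate is bookkeeping the \emph{subleading} terms. Theorem~\ref{bdy-n}(I)--(III) only need the leading asymptotics of $g^{B,D}_{\al\ov\be}$ and $\Ric^{B,D}_{\al\ov\be}$, so those are comparatively routine once the standard Bergman expansions are quoted; but to differentiate $\log g_{B,D}$ twice and still control $\Ric^{B,D}_{\al\ov\be}$ one must know that the error terms in Fefferman's expansion of $K_D$ and in the expansion of $g_{B,D}$ are not merely $o(1)$ of the main term but are genuinely differentiable with the expected gain, i.e.\ that $\log g_{B,D}(z) + (n+1)\log\de_D(z)$ extends to a function whose mixed second derivatives are $O(\de_D^{-1})$ in tangential variables and $O(\de_D^{-2})$ with a \emph{smaller} constant in the normal variable — this requires invoking the $C^\infty$ (or sufficiently high $C^k$) nature of Fefferman's expansion, or equivalently the precise results of Diederich and of Hörmander on boundary behavior. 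For (IV), the difficulty is sharper still: one needs the expansion of $g_{B,D}$ to one more order than is needed for $R_{B,D}\to -2$, because $R_{\ti B,D}$ involves derivatives of $\log$ of a quantity that is itself built from two derivatives of $\log g_{B,D}$; I would handle this by first establishing a Bergman--Fuks type representation of $g_{\ti B,D}$ and $R_{\ti B,D}$ in terms of maximal-domain functions (as the paper promises for the proof of Theorem~\ref{loc}), reducing the problem to the unit disc model via the localization theorem, and then computing the disc (or more precisely the half-plane) model explicitly, where $\de_D^{n+1}g_{\ti B,D}$ and $R_{\ti B,D}$ can be evaluated in closed form; invariance under biholomorphism (equation~\eqref{tr-kf}) guarantees the model computation transfers.
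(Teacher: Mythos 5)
The central step of your plan---componentwise boundary asymptotics of $\Ric^{B,D}_{\al\ov\be}$ obtained by differentiating $\log g_{B,D}$ twice through Fefferman/Diederich-type expansions---is exactly the point you yourself flag as ``the main obstacle,'' and it is a genuine gap rather than bookkeeping. The theorem assumes only a $C^2$ boundary, while Fefferman's expansion of $K_D$ and the differentiated expansions of the Bergman metric you want to quote require $C^\infty$ (or at least high finite) smoothness; under the stated hypothesis those expansions are simply not available, so the claim that $\log g_{B,D}+(n+1)\log\de_D$ has mixed second derivatives of the expected size has no result to be invoked. Moreover, for (I) and (II) with a general vector $u$ you need the full matrix asymptotics of $\Ric^{B,D}_{\al\ov\be}$ (normal, tangential and cross terms, each with the correct weight in $\de_D$), not just the normalized fact $\Ric_{B,D}(z,u)\to -1$; the heuristic $\Ric^{B,D}_{\al\ov\be}\sim -g^{B,D}_{\al\ov\be}$ is what makes your constants come out right, but proving it componentwise under the stated regularity is essentially of the same difficulty as the theorem itself, and your sketch supplies no argument for it. (Your intermediate line ``$\Ric^{B,D}_{\al\ov\be}\sim -(n+1)\,g^{B,D}_{\al\ov\be}/(\mathrm{const})$'' is also garbled as written; what is needed, and what your arithmetic silently uses, is proportionality constant $1$.)

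For contrast, the paper avoids this entirely: it first localizes to $U\cap D$ (Theorem~\ref{loc}), then runs Pinchuk's scaling---compose the normalizing automorphisms $h^{\z^j}$ of Lemma~\ref{C0} with the anisotropic dilations $\La_j$ so that the scaled domains $\ti D_j=S_j(U\cap D)$ converge to the Siegel half-space $D_\infty$---and uses a Ramadanov-type interior stability result (Proposition~\ref{stability-Fuks}) giving convergence of $K_{\ti D_j}$ with all derivatives on compacts, hence of $\tau_{\ti B,\ti D_j}$, $g_{\ti B,\ti D_j}$ and the curvature at the fixed interior point $b^*=({}'0,-1)$; the limits are then evaluated on $\mf B^n$ via the Cayley transform and Proposition~\ref{ball}, and the factors $\de_j^{-1}$, $\de_j^{-1/2}$, $\de_j^{-(n+1)}$ and the Levi form enter only through $S_j'(p^j)$ and the transformation rule \eqref{tr-kf}. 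This argument needs nothing beyond $C^2$ regularity, which is why the theorem can be stated at that level. Your fallback for (IV)---localize, observe that $U\cap D$ is simply connected, hence biholomorphic to $\De$, where the Kobayashi--Fuks curvature is identically $-1/3$---is sound and coincides with the paper's own remark; but for (I)--(III) you would need either to strengthen the smoothness hypothesis and carry out the differentiated Fefferman expansion in full (Klembeck-style), or to replace that part of your plan by a scaling/stability argument of the above kind.
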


Note that Theorem~\ref{bdy-n}~(I), (II) are analogs of Graham's result \cite{Gr} for the Kobayashi and Carath\'{e}odory metrics. Also, Theorem~\ref{bdy-n}~(IV) combined with Theorem~1.17 of \cite{GK} immediately yields

\begin{cor}
Let $D,D' \subset \mbb{C}$ be $C^2$-smoothly bounded domains equipped with the metrics $ds^2_{\ti B,D}$ and $ds^2_{\ti B,D'}$ respectively. Then every isometry $f: (D, ds^2_{\ti B,D}) \to (D', ds^2_{\ti B,D'})$ is either holomorphic or conjugate holomorphic.
\end{cor}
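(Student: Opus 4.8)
The plan is to obtain the corollary as an immediate consequence of Theorem~\ref{bdy-n}(IV) together with the planar rigidity result Theorem~1.17 of \cite{GK}. In complex dimension one a bounded domain with $C^2$ boundary is automatically strongly pseudoconvex, so Theorem~\ref{bdy-n} applies to both $D$ and $D'$; moreover the Kobayashi--Fuks metric then has the conformal form $ds^2_{\ti B,D}=g_{\ti B,D}(z)\,\vert dz\vert^2$, whose holomorphic sectional curvature $R_{\ti B,D}$ is precisely its Gaussian curvature. By Theorem~\ref{bdy-n}(IV) we have $R_{\ti B,D}(z)\to -\frac{1}{3}$ as $z\to\pa D$, so the Gaussian curvature of $ds^2_{\ti B,D}$ extends continuously to $\ov D$ with the constant value $-\frac{1}{3}$ on $\pa D$, and likewise for $ds^2_{\ti B,D'}$ on $\ov{D'}$.

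Next I would record the remaining hypotheses needed to invoke Theorem~1.17 of \cite{GK}: each of the metrics $ds^2_{\ti B,D}$ and $ds^2_{\ti B,D'}$ is K\"ahler (automatic in complex dimension one) and complete (this is the completeness of the Kobayashi--Fuks metric on bounded $C^2$-smooth strongly pseudoconvex domains recorded in the introduction and taken up in Section~5); should ``isometry'' be read in the metric-space sense, completeness also allows one to first upgrade $f$ to a smooth Riemannian isometry by the Myers--Steenrod theorem. With the Gaussian curvatures of both metrics continuous up to the respective boundaries, Theorem~1.17 of \cite{GK} then applies and yields exactly that $f$ is holomorphic or conjugate holomorphic, which is the assertion of the corollary.

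The only input from the present paper is Theorem~\ref{bdy-n}(IV); the rest is the cited theorem together with the soft fact, packaged into it, that isometries of two-dimensional Riemannian manifolds are conformal or anticonformal with respect to the underlying conformal structures. Accordingly, the single point that needs care is that the hypotheses of Theorem~1.17 of \cite{GK} are met verbatim by $ds^2_{\ti B,D}$ and $ds^2_{\ti B,D'}$---in particular that the specific boundary curvature value $-\frac{1}{3}$ furnished by Theorem~\ref{bdy-n}(IV) is of the form that theorem requires---and I do not expect any substantive difficulty beyond this bookkeeping.
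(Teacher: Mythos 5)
Your proposal is correct and follows exactly the paper's route: the corollary is obtained by combining Theorem~\ref{bdy-n}~(IV) (the Gaussian curvature of $ds^2_{\ti B,D}$ tends to the negative constant $-\tfrac{1}{3}$ at the boundary, $C^2$-bounded planar domains being automatically strongly pseudoconvex) with Theorem~1.17 of \cite{GK}. The paper itself states this as an immediate consequence with no further argument, so your additional bookkeeping (completeness, K\"ahler in dimension one, Myers--Steenrod) only makes explicit what the paper leaves implicit.
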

\medskip

Our final result is motivated by a theorem of Herbort  \cite[Theorem~1.2]{Herbort1983}
on the existence of closed geodesics for the Bergman metric on strongly pseudoconvex domains.

\begin{thm}\label{geodesic}
Let $D \subset \mf{C}^n$ be a smoothly bounded strongly pseudoconvex domain which is not simply connected. Then every nontrivial homotopy class in $\pi_1(D)$ contains a closed geodesic for $ds^2_{\ti B, D}$.
\end{thm}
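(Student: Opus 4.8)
The plan is to mimic Herbort's argument for the Bergman metric, the essential point being that closed geodesics in a given free homotopy class are produced by minimizing length, and the minimization succeeds provided the metric is complete and the relevant minimizing sequences cannot escape to the boundary. Concretely, fix a nontrivial free homotopy class $[\gamma]$ in $\pi_1(D)$ and let $\ell$ be the infimum of the $ds^2_{\ti B,D}$-length over all loops in $[\gamma]$; since the class is nontrivial and $D$ is not simply connected, $\ell>0$. Take a minimizing sequence of loops $\gamma_k$ with lengths decreasing to $\ell$. The first step is to show these loops stay in a fixed compact subset of $D$: this is where the boundary asymptotics of Theorem~\ref{bdy-n} enter. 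From Theorem~\ref{bdy-n}~(I) the normal component of a vector is weighted like $\de_D(z)^{-2}|u_N|^2$, so a curve that ventures into the collar $\{\de_D<\epsilon\}$ and returns picks up length bounded below by a constant multiple of $\int \de_D(z(t))^{-1}|z_N'(t)|\,dt \gtrsim \log(\epsilon/\de_{\min})$, which blows up as the curve approaches the boundary. Hence for $\epsilon$ small enough any loop dipping into $\{\de_D<\epsilon\}$ has length exceeding $2\ell$, so we may assume every $\gamma_k\subset K:=\{\de_D\ge \epsilon\}$, a compact set.

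The second step is a standard compactness/Arzel\`a--Ascoli argument on $K$. Parametrize each $\gamma_k$ proportionally to $ds^2_{\ti B,D}$-arclength; since the metric coefficients $g^{\ti B,D}_{\al\ov\be}$ are smooth and bounded above and below on $K$ (they are real-analytic in $D$, hence continuous, and $K$ is compact, so the metric is comparable to the Euclidean metric on $K$), the $\gamma_k$ are uniformly Lipschitz in the Euclidean sense and hence have a subsequence converging uniformly to a Lipschitz loop $\gamma_\infty\subset K$. Lower semicontinuity of length under uniform convergence (again using boundedness of the metric tensor on $K$) gives $\mathrm{length}(\gamma_\infty)\le\ell$, and $\gamma_\infty$ lies in the class $[\gamma]$ because uniformly close loops in $D$ are freely homotopic; therefore $\mathrm{length}(\gamma_\infty)=\ell$ and $\gamma_\infty$ is a shortest loop in its class.

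The third step is to upgrade the length-minimizer $\gamma_\infty$ to a smooth closed geodesic. A length-minimizing curve in its free homotopy class is, after reparametrization by arclength, locally length-minimizing, hence a geodesic away from possible corners; but a local length-minimizer in a smooth Riemannian manifold has no corners (a corner could be shortcut), so $\gamma_\infty$ is an unbroken geodesic, and since it is a loop realizing the minimum it closes up smoothly (the initial and terminal velocity vectors must agree, otherwise one shortcuts across the base point). Because $ds^2_{\ti B,D}$ is a real-analytic K\"ahler metric, $\gamma_\infty$ is in fact real-analytic. This yields the desired closed geodesic in $[\gamma]$.

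The main obstacle is the first step: proving the a priori confinement of minimizing loops to a compact set, which requires a quantitative lower bound on length for curves that approach $\pa D$. Theorem~\ref{bdy-n} is stated as a pointwise limit ``as $z\to p^0$'' in specially chosen coordinates, so to integrate it along a curve one needs the estimate $\tau_{\ti B,D}(z,u)\ge c\,\de_D(z)^{-1}|u_N(z)|$ to hold \emph{uniformly} for all $z$ in a one-sided neighborhood of the whole boundary $\pa D$ (with a uniform constant $c>0$), not merely asymptotically near a single point. This uniform lower bound does follow from the known boundary behavior of the Bergman metric together with the relation $g^{\ti B,D}_{\al\ov\be}=(n+1)g^{B,D}_{\al\ov\be}-\Ric^{B,D}_{\al\ov\be}$ and the fact that the Ricci curvature of the Bergman metric tends to $-(n+1)\cdot(\text{something positive})$, i.e. $\Ric^{B,D}_{\al\ov\be}$ is of lower order compared with $(n+1)g^{B,D}_{\al\ov\be}$ in the normal direction near $\pa D$; one can alternatively invoke completeness of $ds^2_{\ti B,D}$ on strongly pseudoconvex domains (noted in the excerpt) together with a distance-to-boundary estimate $\mathrm{dist}_{\ti B,D}(z_0,z)\gtrsim |\log\de_D(z)|$. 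Making this uniform estimate precise, and checking that the sublevel set it produces is compact in $D$, is the technical heart of the argument; once it is in hand, steps two and three are routine variational Riemannian geometry.
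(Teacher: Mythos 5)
Your proposal does not follow the paper's route: the paper does not redo the variational argument at all, but quotes Herbort's existence theorem (Theorem~\ref{Herbort}) and merely verifies its two hypotheses for $ds^2_{\ti B,D}$, namely local simple connectivity of $D$ near $\pa D$ (from smoothness of the boundary) and completeness together with Herbort's property (P), i.e.\ a uniform blow-up of the metric in \emph{all} directions near $\pa D$. Both are obtained from the single pointwise comparison $\tau_{\ti B,D}(z,u)=\tau_{B,D}(z,u)\sqrt{n+1-\Ric_{B,D}(z,u)}\ge C\,\tau_{B,D}(z,u)$ (see \eqref{B-KF}), valid since $\Ric_{B,D}\to-1$ uniformly near the boundary by Krantz--Yu; completeness and property (P) for the Bergman metric then transfer to the Kobayashi--Fuks metric. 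This also disposes of the uniformity issue you flag at the end: one never needs to upgrade the pointwise limits of Theorem~\ref{bdy-n} to uniform estimates, because the known uniform Bergman bounds do the work.

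In your own argument there is a genuine gap, and it sits exactly at the step you call the heart of the proof. The estimate you integrate, $\tau_{\ti B,D}(z,u)\gtrsim \de_D(z)^{-1}|u_N(z)|$, penalizes only the normal component of the velocity, so it gives the $\log(\epsilon_0/\de_{\min})$ lower bound only for curves that actually cross the collar from a fixed level $\{\de_D=\epsilon_0\}$ down to depth $\de_{\min}$. It gives no length at all to a loop that lies entirely inside a thin collar $\{\de_D<\epsilon_0\}$ and moves essentially tangentially, and for the same reason your assertion that the infimum $\ell$ is positive is unproved: a priori a nontrivial class might be represented by loops sliding out to $\pa D$ with lengths tending to $0$, or the infimum might only be approached by boundary-escaping loops, in which case your confinement and Arzel\`a--Ascoli steps never start. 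Excluding this requires precisely the two ingredients of Herbort's theorem that your sketch omits: a uniform lower bound in \emph{all} directions (e.g.\ the tangential blow-up of order $\de_D^{-1/2}$, or simply $\tau_{\ti B,D}\ge C\tau_{B,D}$ together with the known boundary behavior of the Bergman metric), which forces any loop of bounded $ds^2_{\ti B,D}$-length confined to a collar of depth $\de$ to have Euclidean diameter tending to $0$ with $\de$, and the simple connectivity of $U\cap D$ for small boundary neighborhoods $U$ (smoothness of $\pa D$), which then makes such a loop null-homotopic, contradicting nontriviality of the class. With these two points supplied, your minimization scheme (which is essentially Herbort's own proof) can be carried through, but as written the confinement step does not go through.
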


\medskip

{\it Acknowledgements}: The authors thank Kaushal Verma for his support and encouragement.

\section{Some examples}
\begin{prop}\label{ball}
For the unit ball $\mf{B}^n \subset \mf{C}^n$,
\[
ds^2_{\ti B, \mathbb{B}^n}=(n+2)ds^2_{B, \mathbb{B}^n} = (n+1)(n+2)\sum_{\al,\be=1}^n \left(\frac{\de_{\al\ov\be}}{1-\vert z \vert^2}+\frac{\ov z_{\al}z_{\be}}{(1-\vert z \vert^2)^2}\right) dz_{\al} d\ov z_{\be}.
\]
\end{prop}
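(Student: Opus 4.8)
The plan is to compute both sides directly from the explicit Bergman kernel of the ball, $K_{\mf B^n}(z)=\tfrac{n!}{\pi^n}\,(1-|z|^2)^{-(n+1)}$, and differentiate. Since $\log K_{\mf B^n}(z)=\log(n!/\pi^n)-(n+1)\log(1-|z|^2)$, a direct computation of $\pa^2/\pa z_\al\pa\ov z_\be$ gives
\[
g^{B,\mf B^n}_{\al\ov\be}(z)=\frac{\pa^2\log K_{\mf B^n}}{\pa z_\al\pa\ov z_\be}(z)=(n+1)\left(\frac{\de_{\al\ov\be}}{1-|z|^2}+\frac{\ov z_\al z_\be}{(1-|z|^2)^2}\right),
\]
so that $ds^2_{B,\mf B^n}=(n+1)\sum_{\al,\be}\big(\tfrac{\de_{\al\ov\be}}{1-|z|^2}+\tfrac{\ov z_\al z_\be}{(1-|z|^2)^2}\big)\,dz_\al d\ov z_\be$. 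This already identifies the rightmost expression in the statement with $(n+2)\,ds^2_{B,\mf B^n}$, so only the equality $ds^2_{\ti B,\mf B^n}=(n+2)\,ds^2_{B,\mf B^n}$ remains.

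For that I would first compute $g_{B,\mf B^n}=\det G_{B,\mf B^n}$. Writing $G_{B,\mf B^n}(z)=\tfrac{n+1}{1-|z|^2}\big(I+\tfrac{1}{1-|z|^2}\,\ov z\,z^{\,t}\big)$, where $\ov z=(\ov z_1,\dots,\ov z_n)^t$, and applying the rank-one determinant identity $\det(I+uv^t)=1+v^tu$ with $u=\ov z$ and $v=(1-|z|^2)^{-1}z$, one gets
\[
g_{B,\mf B^n}(z)=\Big(\frac{n+1}{1-|z|^2}\Big)^n\Big(1+\frac{|z|^2}{1-|z|^2}\Big)=\frac{(n+1)^n}{(1-|z|^2)^{n+1}}.
\]
Hence $K_{\mf B^n}^{\,n+1}\,g_{B,\mf B^n}=C\,(1-|z|^2)^{-(n+1)(n+2)}$ with $C=(n!/\pi^n)^{n+1}(n+1)^n>0$, using $(n+1)^2+(n+1)=(n+1)(n+2)$. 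Taking $\pa^2/\pa z_\al\pa\ov z_\be$ of $\log\!\big(K_{\mf B^n}^{\,n+1}g_{B,\mf B^n}\big)$, the constant $C$ drops out and the same elementary computation as above, now with the coefficient $(n+1)(n+2)$ in place of $(n+1)$, yields
\[
g^{\ti B,\mf B^n}_{\al\ov\be}(z)=(n+1)(n+2)\left(\frac{\de_{\al\ov\be}}{1-|z|^2}+\frac{\ov z_\al z_\be}{(1-|z|^2)^2}\right)=(n+2)\,g^{B,\mf B^n}_{\al\ov\be}(z),
\]
which is exactly the asserted chain of identities.

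There is essentially no obstacle here; the only structural point is the observation that $g_{B,\mf B^n}$ is itself a constant multiple of $(1-|z|^2)^{-(n+1)}$, hence of $K_{\mf B^n}$, so that $K_{\mf B^n}^{\,n+1}g_{B,\mf B^n}$ is again a constant times a power of $1-|z|^2$ and its K\"ahler potential is a constant multiple of $\log K_{\mf B^n}$ — this is precisely why the Kobayashi--Fuks metric on the ball is a scalar multiple of the Bergman metric. Alternatively, the proportionality $g_{B,\mf B^n}=c\,K_{\mf B^n}$ can be obtained without the determinant computation from the homogeneity of $\mf B^n$: by the biholomorphic transformation rule for the Bergman metric, analogous to \eqref{tr-kf}, both $\det G_{B,D}$ and $K_D$ pick up the factor $|\det F'|^2$, so $g_{B,D}/K_D$ is $\mathrm{Aut}(\mf B^n)$-invariant and therefore constant on the homogeneous domain $\mf B^n$.
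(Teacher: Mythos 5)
Your proposal is correct and follows essentially the same route as the paper: explicit Bergman kernel of the ball, the resulting metric tensor, the determinant $g_{B,\mathbb{B}^n}=(n+1)^n(1-|z|^2)^{-(n+1)}$ (your rank-one determinant identity is just the paper's characteristic-polynomial computation in different clothing), and then the factor $n+2$. The only cosmetic difference is that you differentiate the potential $\log(K_{\mathbb{B}^n}^{n+1}g_{B,\mathbb{B}^n})$ directly, while the paper passes through $\Ric^{B,\mathbb{B}^n}_{\al\ov\be}=-g^{B,\mathbb{B}^n}_{\al\ov\be}$; these are identical by the definition of $g^{\ti B,D}_{\al\ov\be}$.
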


\begin{proof}
Recall that for the unit ball $\mf{B}^n \subset \mf{C}^n$,
\[
K_{\mf B^n}(z)=\frac{n!}{\pi^n}\frac{1}{(1-\vert z \vert^2)^{n+1}},
\]
and so
\[
g_{\al\ov \be}^{B,\mf{B}^n}(z)=(n+1)\frac{\pa^2}{\pa z_{\al} \pa \ov z_{\be}} \log\frac{1}{1-\vert z\vert^2}=(n+1)\left(\frac{\de_{\al\ov\be}}{1-\vert z \vert^2}+\frac{\ov z_{\al}z_{\be}}{(1-\vert z \vert^2)^2}\right).
\]
Denoting the matrix $\ov z z^t$ by $A_z$ and using the fact that its characteristic polynomial is $\det(\la \mf{I}-A_z)=\la^n-\vert z \vert^2 \la^{n-1}$, we obtain
\[
g_{B,\mf{B}^n}(z)=\frac{(n+1)^n}{(1-\vert z \vert^2)^{n+1}},
\]
and hence
\[
\Ric^{B,\mf{B}^n}_{\al\ov \be}(z) = -(n+1)\frac{\pa^2}{\pa z_{\al}\pa \ov z_{\be}} \log\frac{1}{1-\vert z \vert^2}=-g^{B,\mf{B}^n}_{\al \ov \be}(z).
\]
It follows that
\[
g^{\ti B, \mf{B}^n}_{\al \ov \be}(z)=(n+2)g^{B, \mf{B}^n}_{\al\ov \be}(z),
\]
which completes the proof of the proposition.\qed
\end{proof}

\begin{prop}\label{Polydisc}
For the unit polydisc $\De^n \subset \mf{C}^n$,
\[
ds^2_{\ti B, \De^n}=(n+2)ds^2_{B, \De^n}=2(n+2)\sum_{\al=1}^n\dfrac{1}{(1-|z_{\al}|^2)^2}dz_{\al} d\ov z_{\al}.
\]
\end{prop}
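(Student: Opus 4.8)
The plan is to mimic the computation for the ball in Proposition~\ref{ball}, exploiting that on a product domain everything factors over the coordinates. First I would recall that the Bergman kernel of the disc $\De\subset\mf C$ on the diagonal is $K_\De(z)=1/\big(\pi(1-|z|^2)^2\big)$, and that the Bergman kernel is multiplicative under Cartesian products, so
\[
K_{\De^n}(z)=\frac{1}{\pi^n}\prod_{\al=1}^n\frac{1}{(1-|z_\al|^2)^2}.
\]

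Next I would compute the Bergman metric. Since $\log K_{\De^n}(z)=-n\log\pi-2\sum_{\al}\log(1-|z_\al|^2)$ is a sum of functions each depending on a single variable, the matrix $G_{B,\De^n}(z)$ is diagonal, and using $\tfrac{\pa^2}{\pa z_\al\pa\ov z_\al}\big(-\log(1-|z_\al|^2)\big)=(1-|z_\al|^2)^{-2}$ we get
\[
g^{B,\De^n}_{\al\ov\be}(z)=\frac{2\,\de_{\al\ov\be}}{(1-|z_\al|^2)^2}.
\]
This already gives the last equality in the statement. Taking determinants of the diagonal matrix, $g_{B,\De^n}(z)=2^n\prod_\al(1-|z_\al|^2)^{-2}$.

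Then I would read off the Ricci tensor: $\log g_{B,\De^n}(z)=n\log 2-2\sum_\al\log(1-|z_\al|^2)$ differs from $\log K_{\De^n}(z)$ only by an additive constant, so it has the same mixed second derivatives, giving
\[
\Ric^{B,\De^n}_{\al\ov\be}(z)=-\frac{\pa^2\log g_{B,\De^n}}{\pa z_\al\pa\ov z_\be}(z)=-\frac{2\,\de_{\al\ov\be}}{(1-|z_\al|^2)^2}=-g^{B,\De^n}_{\al\ov\be}(z).
\]
Substituting into $g^{\ti B,\De^n}_{\al\ov\be}=(n+1)g^{B,\De^n}_{\al\ov\be}-\Ric^{B,\De^n}_{\al\ov\be}$ then yields $g^{\ti B,\De^n}_{\al\ov\be}=(n+2)g^{B,\De^n}_{\al\ov\be}$, which is the first equality and completes the proof.

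There is essentially no real obstacle here: the argument is a one-variable computation carried out $n$ times in parallel, and the crucial simplification $\Ric^{B,\De^n}=-G_{B,\De^n}$ is immediate precisely because on the polydisc $\log g_{B,\De^n}$ and $\log K_{\De^n}$ agree up to a constant. The only point requiring a little care is the determinant step, where one should note that the diagonality of $G_{B,\De^n}(z)$ makes $g_{B,\De^n}(z)$ simply the product of its diagonal entries.
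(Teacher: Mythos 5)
Your proposal is correct and follows essentially the same route as the paper: product formula for $K_{\De^n}$, diagonal Bergman metric $g^{B,\De^n}_{\al\ov\be}=2\de_{\al\be}(1-|z_\al|^2)^{-2}$, the observation that $\log g_{B,\De^n}$ and $\log K_{\De^n}$ differ by a constant so $\Ric^{B,\De^n}_{\al\ov\be}=-g^{B,\De^n}_{\al\ov\be}$, and hence $g^{\ti B,\De^n}_{\al\ov\be}=(n+2)g^{B,\De^n}_{\al\ov\be}$. No gaps; the argument matches the paper's proof step for step.
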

\begin{proof}
For the unit polydisc $\De^n \subset \mf{C}^n$, recall that by the product formula for the Bergman kernel,
\[
K_{\De^n}(z)=\prod_{j=1}^n K_{\De}(z_j)=\frac{1}{\pi^n}\prod_{j=1}^n \frac{1}{(1-\vert z_j\vert^2)^2} ,
\]
and therefore
\[
g_{\al\ov \be}^{B,\De^n}(z)=2\,\frac{\pa^2}{\pa z_{\al} \pa \ov z_{\be}}\sum_{j=1}^n \log\frac{1}{1-\vert z_j\vert^2}=\dfrac{2\de_{\al \be}}{(1-|z_{\al}|^2)^2}.
\]
Thus
\[
g_{B,\De^n}(z)=2^n\prod_{j=1}^n\dfrac{1}{(1-|z_j|^2)^2} ,
\]
and hence
\[
\Ric^{B,\De^n}_{\al\ov \be}(z) =-2\,\frac{\pa^2}{\pa z_{\al}\pa \ov z_{\be}}\sum_{j=1}^n \log\frac{1}{1-\vert z_j \vert^2}=-g^{B,\De^n}_{\al \ov \be}(z).
\]
It follows that
\[
g^{\ti B, \De^n}_{\al \ov \be}(z)=(n+2)g^{B, \De^n}_{\al\ov \be}(z)=\dfrac{2(n+2)}{(1-|z_{\al}|^2)^2}\de_{\al \be},
\]
and the proof of the proposition is complete.\qed
\end{proof}

In general, if $D$ is a bounded domain with a transitive group of holomorphic automorphisms, the Bergman metric is K\"ahler-Einstein, and so $ds^2_{\ti B, D}$ is a constant multiple of $ds^2_{B, D}$.

\section{Localization}
Our goal in this section is to prove Theorem \ref{loc}. The localization of the Bergman kernel, the Bergman metric, and various invariants related to them near holomorphic peak points on pseudoconvex domains are well known, see for example \cite{KY, Krantz-Yu} for bounded domains, and \cite{Nik2002, Nik2015, Nik2020} for unbounded domains. As in the case of the Bergman metric, the main idea is to express the Kobayashi--Fuks metric and the other invariants in terms of certain maximal domain functions. We will show that $\tau_{\ti B,D}$ can be expressed in terms of a maximal domain function introduced by Krantz and Yu in \cite{Krantz-Yu}. However, for the Gaussian curvature of $ds^2_{\ti B,D}$, we will require some new maximal domain functions. We begin by recalling the definition of the domain function of Krantz and Yu: For a bounded domain $D \subset \mathbb{C}^n$, $z_0 \in D$, and a nonzero vector $u \in \mathbb{C}^n$, let
\begin{equation} \label{I}
I_{D}(z_0,u)=\sup \left\{u^tf''(z_0)\ov G_{B, D}^{-1}(z_0)\ov{f''}(z_0)\ov u: \|f\|_{D}=1, f(z_0)=f'(z_0)=0\right\}.
\end{equation}
Here $f''(z_0)$ is the symmetric matrix
\[
f''(z_0)=\begin{pmatrix}\frac{\pa^2 f}{\pa z_i \pa z_j}(z_0)\end{pmatrix}_{n \times n}.
\]
It was shown in Proposition~2.1 (ii) of \cite{Krantz-Yu} that
\begin{equation}\label{Ric-Berg-I}
\Ric_{B, D}(z_0,u)=(n+1)-\frac{1}{\tau^2_{B, D}(z_0,u)K_{D}(z_0)}I_{D}(z_0,u).
\end{equation}
Also, from the definition of the Kobayashi--Fuks metric, note that
\begin{equation}\label{B-KF}
\tau_{\ti B, D}(z_0,u)=\tau_{B, D}(z_0,u)\sqrt{n+1-\Ric_{B, D}(z_0,u)}.
\end{equation}
Combining \eqref{Ric-Berg-I} and \eqref{B-KF} we obtain
\begin{prop}\label{F_metric}
Let $D$ be a bounded domain in $\mf{C}^n$, $z_0\in D$, and $u\in \mbb{C}^n$. Then we have
\begin{align*}
\tau^2_{\ti B, D}(z_0,u)= \frac{I_{D}(z_0,u)}{K_{D}(z_0)}.
\end{align*}
\end{prop}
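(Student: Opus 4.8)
The plan is to derive Proposition~\ref{F_metric} by a straightforward algebraic manipulation of the two identities \eqref{Ric-Berg-I} and \eqref{B-KF} already recorded in the text. First I would start from \eqref{B-KF}, which expresses $\tau^2_{\ti B, D}(z_0,u) = \tau^2_{B, D}(z_0,u)\big(n+1-\Ric_{B, D}(z_0,u)\big)$, and then substitute the value of $n+1-\Ric_{B, D}(z_0,u)$ obtained by rearranging \eqref{Ric-Berg-I}, namely
\[
n+1-\Ric_{B, D}(z_0,u)=\frac{I_{D}(z_0,u)}{\tau^2_{B, D}(z_0,u)\,K_{D}(z_0)}.
\]
Plugging this in, the factor $\tau^2_{B, D}(z_0,u)$ cancels and one is left with $\tau^2_{\ti B, D}(z_0,u)=I_{D}(z_0,u)/K_{D}(z_0)$, which is exactly the claimed formula.

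The only things to be careful about are the validity of the intermediate quantities. One should note that $\tau^2_{B, D}(z_0,u)>0$ for $u\neq 0$ since the Bergman metric is positive definite on a bounded domain, so the division performed in rearranging \eqref{Ric-Berg-I} is legitimate; and when $u=0$ both sides of the asserted identity are trivially zero (with the convention $I_D(z_0,0)=0$ from \eqref{I}), so that degenerate case needs at most a one-line remark. Since \eqref{Ric-Berg-I} is quoted from \cite[Proposition~2.1(ii)]{Krantz-Yu} and \eqref{B-KF} follows immediately from the definition $g^{\ti B,D}_{\al\ov\be}=(n+1)g^{B,D}_{\al\ov\be}-\Ric^{B,D}_{\al\ov\be}$ upon contracting with $u^\al\ov u^\be$ and dividing/multiplying by $\tau^2_{B,D}(z_0,u)$, no further input is required.

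There is essentially no obstacle here: the proposition is a bookkeeping consequence of the two displayed equations, and the ``hard part'' — Krantz and Yu's computation of the Ricci curvature in terms of $I_D$ — has already been imported. The proof in the paper should therefore be just two or three lines combining \eqref{Ric-Berg-I} and \eqref{B-KF} as above. If anything deserves emphasis, it is simply that this clean expression is what makes the localization argument work, since $I_D$ and $K_D$ both localize well near holomorphic peak points; but that motivational point belongs to the surrounding discussion rather than the proof itself.
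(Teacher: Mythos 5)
Your proposal is correct and is exactly the paper's argument: the proposition is stated as an immediate consequence of combining \eqref{Ric-Berg-I} and \eqref{B-KF}, precisely the substitution and cancellation you carry out. Your extra remarks on positivity of $\tau_{B,D}(z_0,u)$ for $u\neq 0$ and the trivial case $u=0$ are harmless refinements the paper leaves implicit.
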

From this proposition, we immediately obtain the localization of $\tau_{\ti B,D}$.

\begin{proofof}{Theorem~\ref{loc}~(i)}
It was shown in \cite{KY} that
\begin{equation}\label{K-loc-KY}
\lim_{z \to p^0} \frac{K_D(z)}{K_{U\cap D}(z)}=1
\end{equation}
and in \cite{Krantz-Yu} that
\begin{equation}\label{I-loc-KrY}
\lim_{z \to p^0} \frac{I_D(z,u)}{I_{U\cap D}(z,u)}=1
\end{equation}
uniformly in unit vectors $u$, and hence (I) follows from Proposition~\ref{F_metric}. \hfill $\square$
\end{proofof}

For the localization of $g_{\ti B,D}$, we will need the following lemma. The notation $\mathbb{I}_n$ or simply $\mathbb{I}$, which we have already used in the previous section, stands for the $n\times n$ identity matrix.

\begin{lem}\label{S-D}
Let $D_1, D_2$ be two bounded domains in $\mbb{C}^n$ such that $D_2 \subset D_1$. For any $z_0\in D_2$, there exist a nonsingular matrix $Q$ and positive real numbers $d_1,\ldots,d_n$ such that
\[
Q^tG_{\ti B, D_1}(z_0)\ov Q=\diag\{d_1,\ldots,d_n\} \quad\text{and}\quad Q^t G_{\ti B, D_2}(z_0) \ov Q=\mathbb{I}.
\]
\end{lem}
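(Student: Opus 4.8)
The plan is to read this off from the standard simultaneous-diagonalization theorem for a pair of Hermitian positive definite matrices, once one untangles the congruence $Q \mapsto Q^t(\cdot)\ov Q$ that appears here (this is the form dictated by the transformation rule \eqref{tr-kf} for the Kobayashi--Fuks matrices under a linear change of coordinates) from the more familiar one $P \mapsto P^*(\cdot)P$.

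First I would record that for $i=1,2$ the matrix $G_{\ti B, D_i}(z_0)$ is Hermitian and positive definite: Hermitian symmetry is immediate from $g^{\ti B, D_i}_{\al\ov\be}(z_0) = \frac{\pa^2 \log(K_{D_i}^{n+1} g_{B,D_i})}{\pa z_{\al} \pa \ov z_{\be}}(z_0)$ together with the fact that the potential is real-valued, while positive definiteness is precisely Kobayashi's theorem recalled in the introduction. (The inclusion $D_2 \subset D_1$ plays no role in this statement; it holds for any two bounded domains containing $z_0$, and indeed for any pair of Hermitian positive definite matrices.) Since $G_{\ti B, D_2}(z_0) > 0$, I can choose a nonsingular matrix $R$ with $R^* G_{\ti B, D_2}(z_0) R = \mathbb{I}$, for instance $R = G_{\ti B, D_2}(z_0)^{-1/2}$. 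Then $M := R^* G_{\ti B, D_1}(z_0) R$ is Hermitian, and it is positive definite because it is congruent to $G_{\ti B, D_1}(z_0)$ via the nonsingular matrix $R$; hence the spectral theorem provides a unitary $V$ with $V^* M V = \diag\{d_1,\ldots,d_n\}$, where each $d_k > 0$ since $M > 0$. As $V$ is unitary, $V^*\big(R^* G_{\ti B, D_2}(z_0) R\big) V = V^* V = \mathbb{I}$, so setting $P := RV$ gives $P^* G_{\ti B, D_1}(z_0) P = \diag\{d_1,\ldots,d_n\}$ and $P^* G_{\ti B, D_2}(z_0) P = \mathbb{I}$.

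The final step is the bookkeeping switch to $Q := \ov P$. This $Q$ is nonsingular, and since $Q^t = (\ov P)^t = P^*$ and $\ov Q = P$, we obtain $Q^t G_{\ti B, D_i}(z_0) \ov Q = P^* G_{\ti B, D_i}(z_0) P$ for $i = 1,2$, which is exactly the pair of identities asserted. I do not anticipate any genuine obstacle; the only point demanding care is keeping the complex conjugation straight so as not to conflate the congruence $Q^t(\cdot)\ov Q$ with $Q^*(\cdot)Q$.
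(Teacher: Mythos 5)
Your proof is correct and is essentially the paper's own argument: first normalize $G_{\ti B, D_2}(z_0)$ to the identity by a nonsingular congruence, then unitarily diagonalize the transformed $G_{\ti B, D_1}(z_0)$ and compose the two matrices. The only cosmetic difference is that the paper works throughout with the congruence $A \mapsto A^t(\cdot)\ov A$ and justifies Hermitian positive definiteness of the intermediate matrix by invoking the transformation rule \eqref{tr-kf} (identifying it with the Kobayashi--Fuks matrix of the linearly transformed domain $A^{-1}D_1$), whereas you argue purely by linear algebra in the $P^*(\cdot)P$ form and pass to $Q=\ov P$ at the end; both are valid, and you are right that the hypothesis $D_2\subset D_1$ is never used.
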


\begin{proof}
Note that as $G_{\ti B, D_2}(z_0)$ is a positive definite Hermitian matrix one can find an invertible matrix $A$ such that
\[
A^tG_{\ti B, D_2}(z_0) \ov A=\mathbb{I}.
\]
By the transformation rule \eqref{tr-kf} applied to $A: A^{-1}D_1 \to D_1$ and $A: A^{-1}D_2 \to D_2$,
\[
G_{\ti B, A^{-1}D_1}(A^{-1}z_0)=A^tG_{\ti B, D_1}(z_0) \ov A\quad\text{and}\quad G_{\ti B, A^{-1}D_2}(A^{-1}z_0)=A^t G_{\ti B, D_2}(z_0)\ov A=\mathbb{I}.
\]
From the first identity above, $A^tG_{\ti B, D_1}(z_0)\ov A$ is a positive definite Hermitian matrix and hence there exists a unitary matrix $B$ such that
\[
B^t(A^t G_{\ti B, D_1}(z_0) \ov A)\ov B=\diag\{d_1,\ldots,d_n\}\quad \text{for some}\quad d_1,\ldots,d_n>0.
\]
Now letting $Q=AB$ the lemma follows.\qed
\end{proof}

\begin{proofof}{Theorem~\ref{loc} (II)}
By Lemma \ref{S-D}, there exist an invertible matrix $Q(z)$ and positive real numbers $d_1(z), \ldots, d_n(z)$ such that 
\begin{equation*}
Q^t(z)G_{\ti B, D}(z) \ov Q(z)=\diag \{d_1(z),\ldots,d_n(z)\} \quad \text{and} \quad Q^t(z)G_{\ti B, U\cap D}(z)\ov Q(z)=\mathbb{I}.
\end{equation*}
Taking determinant on both sides of these equations yields
\[
\frac{g_{\ti B, D}(z)}{g_{\ti B, U\cap D}(z)}=\prod\limits_{j=1}^n d_j(z).
\]
Also, by Proposition~\ref{F_metric},
\begin{equation*}
\tau^2_{\ti B, D}(z,u)=\frac{K_{U\cap D}(z)}{K_{D}(z)}\frac{I_{D}(z,u)}{I_{U\cap D}(z,u)}\,\tau^2_{\ti B, U\cap D}(z,u).
\end{equation*}
Putting $u=Q(z)e_j$ in the above equation, we get 
\[
d_j(z)=\frac{K_{U\cap D}(z)}{K_{D}(z)}\frac{I_{D}\big(z,Q(z)e_j\big)}{I_{U\cap D}\big(z,Q(z)e_j\big)}\quad \text{for}\quad j=1,\ldots,n.
\]
Therefore,
\begin{multline*}
\frac{g_{\ti B, D}(z)}{g_{\ti B, U\cap D}(z)}=\left(\frac{K_{U\cap D}(z)}{K_{D}(z)}\right)^n\,\prod\limits_{j=1}^n \frac{I_{D}\big(z,Q(z)e_j\big)}{I_{U\cap D}\big(z,Q(z)e_j\big)}
=\left(\frac{K_{U\cap D}(z)}{K_{D}(z)}\right)^n\,\prod\limits_{j=1}^n \frac{I_{D}\big(z,v_j(z)\big)}{I_{U\cap D}\big(z,v_j(z)\big)}
\end{multline*}
where $v_j(z)=Q(z)e_j/\|Q(z)e_j\|$. Now (II) follows immediately from \eqref{K-loc-KY} and \eqref{I-loc-KrY}. \qed
\end{proofof}

In general, the Kobayashi--Fuks metric and its associated objects do not satisfy monotonicity property. Nevertheless, we show that they can be compared. Recall that the Bergman canonical invariant on $D$ is the function defined by
\[
J_D(z)= \frac{\det G_{B,D}(z)}{K_D(z)} = \frac{g_{B,D}(z)}{K_D(z)}.
\]
From the transformation rule for the Bergman kernel it is evident that $J_D$ is a biholomorphic invariant. 

\begin{prop}\label{Monotone}
Let $D_1, D_2$ be two bounded domains in $\mbb{C}^n$ such that $D_2 \subset D_1$. For any $z_0\in D_2$ and $u\in \mbb{C}^n$, we have
\begin{itemize}
\item[\em{(a)}]\,$\tau^2_{\ti B, D_1} (z_0,u)\leq \left(\frac{K_{D_2}(z_0)}{K_{D_1}(z_0)}\right)^{n+1}\left(\frac{J_{D_2}(z_0)}{J_{D_1}(z_0)}\right)\, \tau^2_{\ti B, D_2} (z_0,u),$
\item[\em{(b)}]\,$u^t \ov G_{\ti B, D_1}^{-1}(z_0)\ov u \geq \left(\frac{K_{D_1}(z_0)}{K_{D_2}(z_0)}\right)^{n+1} \left(\frac{J_{D_1}(z_0)}{J_{D_2}(z_0)}\right) u^t \ov G_{\ti B, D_2}^{-1}(z_0) \ov u.$
\end{itemize}

\end{prop}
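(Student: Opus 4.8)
The plan is to reduce both inequalities to the known monotonicity of the Bergman kernel $K_{D_2}(z_0)\geq K_{D_1}(z_0)$ together with the Bergman--Fuks type formula of Proposition~\ref{F_metric}, $\tau^2_{\ti B,D}(z_0,u)=I_D(z_0,u)/K_D(z_0)$. The quantity $I_D(z_0,u)$ is a supremum over a unit ball in $A^2(D)$ of functions vanishing to second order at $z_0$, but the functional being maximized involves the inverse Bergman metric matrix $\ov G_{B,D}^{-1}(z_0)$, which is \emph{not} monotone; so a direct comparison of $I_{D_1}$ and $I_{D_2}$ fails, and this is precisely where the canonical invariant $J_D$ must enter to absorb the discrepancy. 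Concretely, I would first rewrite $I_D(z_0,u)$ so the metric-dependence is separated: since $G_{B,D}(z_0)$ is positive definite Hermitian, diagonalize it simultaneously in a way compatible with restriction, or more simply use the identity $\det G_{B,D}(z_0)=g_{B,D}(z_0)=J_D(z_0)K_D(z_0)$ to track the Jacobian-type factors.

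The key computational step is the following reformulation. For $D_2\subset D_1$ and $z_0\in D_2$, restriction gives a norm-nonincreasing inclusion $A^2(D_1)\hookrightarrow A^2(D_2)$ which preserves the conditions $f(z_0)=f'(z_0)=0$. Hence if in the definition \eqref{I} of $I_{D_1}(z_0,u)$ we replace $\ov G_{B,D_1}^{-1}(z_0)$ by $\ov G_{B,D_2}^{-1}(z_0)$, monotonicity of the Bergman metric in the form $G_{B,D_1}(z_0)\leq G_{B,D_2}(z_0)$ — equivalently $G_{B,D_1}^{-1}(z_0)\geq G_{B,D_2}^{-1}(z_0)$ as positive definite matrices — together with the norm comparison $\|f\|_{D_2}\leq\|f\|_{D_1}$ would let us bound the $D_1$-supremum by something comparable to the $D_2$-supremum, up to the determinant ratio $\det\!\big(\ov G_{B,D_1}^{-1}(z_0)\ov G_{B,D_2}(z_0)\big)$. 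That ratio is exactly
\[
\frac{g_{B,D_2}(z_0)}{g_{B,D_1}(z_0)}=\frac{J_{D_2}(z_0)K_{D_2}(z_0)}{J_{D_1}(z_0)K_{D_1}(z_0)}.
\]
Carrying the factor of $K_{D_2}(z_0)/K_{D_1}(z_0)$ from Proposition~\ref{F_metric} through this estimate, I expect to land on exactly the power $n+1$ of $K_{D_2}/K_{D_1}$ and the single power of $J_{D_2}/J_{D_1}$ appearing in (a).

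For part (b), I would dualize. By Proposition~\ref{F_metric} (or directly from the definition of the Kobayashi--Fuks metric), the sesquilinear form $u\mapsto u^t\ov G_{\ti B,D}^{-1}(z_0)\ov u$ is the length of the covector $u$ in the dual metric, and there is a standard variational characterization: $u^t\ov G_{\ti B,D}^{-1}(z_0)\ov u=\sup\{|u^tv|^2/(v^t\ov G_{\ti B,D}(z_0)\ov v)\}$ over nonzero $v$, i.e. it is the reciprocal-type quantity dual to $\tau^2_{\ti B,D}$. Applying the inequality of (a) pointwise in the direction $v$ and taking the supremum reverses the inequality and inverts the constant, which produces precisely the stated bound (b) with the exponents of $K_{D_1}/K_{D_2}$ and $J_{D_1}/J_{D_2}$ as written.

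The main obstacle, and the step requiring care, is the matrix bookkeeping in the first part: making the substitution $\ov G_{B,D_1}^{-1}\leadsto \ov G_{B,D_2}^{-1}$ inside the supremum \eqref{I} rigorous. One cannot simply swap the matrices under the $\sup$ because the vector $u$ is fixed while the matrix acts on $\ov{f''}(z_0)\ov u$; the clean way is to pick $Q$ with $Q^tG_{B,D_1}(z_0)\ov Q=\mathbb I$ and $Q^tG_{B,D_2}(z_0)\ov Q=\diag\{d_1,\dots,d_n\}$ with each $d_j\geq 1$ (the Bergman analog of Lemma~\ref{S-D}, valid because $G_{B,D_2}(z_0)\geq G_{B,D_1}(z_0)$), change variables $f\mapsto f\circ Q$ so that the quadratic form in \eqref{I} becomes the Hilbert--Schmidt-type norm $\|Q^tf''(z_0)\ov Q\,\ov{(Q^{-1}u)}\|^2$, and then compare the two domains' suprema using $1\leq d_j$ and $\prod d_j = g_{B,D_2}(z_0)/g_{B,D_1}(z_0)$. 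Tracking the norm factors $\|f\circ Q\|^2_{Q^{-1}D_i}=|\det Q|^{-2}\|f\|^2_{D_i}$ and the Bergman kernel transformation under $Q$ is routine once this setup is fixed, and it is here that the exact exponent $n+1$ is forced.
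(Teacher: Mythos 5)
Your reduction via Proposition~\ref{F_metric} and your duality argument for (b) (writing $u^t \ov G_{\ti B, D}^{-1}(z_0)\ov u$ as a supremum of $|u^t\ov v|^2/\tau^2_{\ti B,D}(z_0,v)$ and applying (a) direction by direction) are sound, and the latter is in fact a legitimate alternative to the paper's route for (b), which instead simultaneously diagonalizes $G_{\ti B,D_1}(z_0)$ and $G_{\ti B,D_2}(z_0)$ via Lemma~\ref{S-D} and feeds the vectors $Qe_j$ into (a). But since (b) rests on (a) in either approach, the real issue is your proof of (a), and there it has a genuine gap: you invoke ``monotonicity of the Bergman metric in the form $G_{B,D_1}(z_0)\leq G_{B,D_2}(z_0)$'' (and later $d_j\geq 1$ in the simultaneous diagonalization). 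The Bergman metric is \emph{not} monotone under inclusion of domains --- only the kernel $K_D$ and the extremal numerators in the Bergman--Fuks representations are monotone, not their ratio --- and indeed the paper stresses exactly this point (``In general, the Kobayashi--Fuks metric and its associated objects do not satisfy monotonicity property''); the whole purpose of the correction factors $(K_{D_2}/K_{D_1})^{n+1}(J_{D_2}/J_{D_1})$ is to compensate for this failure. Even granting the false matrix inequality, your bookkeeping would not close: a comparison of the quadratic forms $u^t f''(z_0)\,\ov G_{B,D_i}^{-1}(z_0)\,\ov{f''}(z_0)\ov u$ cannot be controlled by a \emph{determinant} ratio $\det\big(\ov G_{B,D_1}^{-1}(z_0)\ov G_{B,D_2}(z_0)\big)$; one would need eigenvalue bounds, which is precisely what you do not have.

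The missing idea is the adjugate comparison that the paper imports from the proof of Proposition~2.2 of \cite{Krantz-Yu}: there exists a nonsingular $P$ (depending on $z_0$) such that
\begin{equation*}
v^t P^{-1}\,\ov{\ad}\,G_{B,D_1}(z_0)\,(P^*)^{-1}\ov v \;\leq\; \Big(\tfrac{K_{D_2}(z_0)}{K_{D_1}(z_0)}\Big)^{n-1} v^t P^{-1}\,\ov{\ad}\,G_{B,D_2}(z_0)\,(P^*)^{-1}\ov v ,
\end{equation*}
which holds because the entries of the adjugate (the $(n-1)\times(n-1)$ minors of the Bergman metric) are themselves expressible through monotone extremal domain functions, at the cost of the factor $(K_{D_2}/K_{D_1})^{n-1}$. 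Writing $f''(z_0)=(P^t)^{-1}A$, setting $v=Au$, using $\ad G=(\det G)\,G^{-1}$ and the norm comparison $\|f\|_{D_2}\leq\|f\|_{D_1}$, one converts this into $I_{D_1}(z_0,u)\leq (K_{D_2}/K_{D_1})^{n}(J_{D_2}/J_{D_1})\,I_{D_2}(z_0,u)$, and then Proposition~\ref{F_metric} supplies the extra factor of $K_{D_2}/K_{D_1}$ to give the exponent $n+1$ in (a). Without this (or an equivalent substitute) your argument for (a) does not go through, so as written the proposal is incomplete.
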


\begin{proof}
Fix $z_0 \in D_2$ and $u \in \mathbb{C}^n$. For simplicity of notations, we will write $K_i$ for $K_{D_i}(z_0)$, $J_i$ for $J_{D_i}(z_0)$, $G_i$ for $G_{B, D_i}(z_0)$, and $\ti G_i$ for $G_{\ti B, D_i}(z_0)$ for $i=1,2$.

(a) In view of Proposition~\ref{F_metric}, it is enough to prove that
\begin{equation}\label{I_1-I_2}
I_{D_1}(z_0,u) \leq \left(\frac{K_2}{K_1}\right)^{n} \frac{J_2}{J_1} \,I_{D_2}(z_0,u).
\end{equation}
From the proof of Proposition~2.2 in \cite {Krantz-Yu} (see page 236) there exists a nonsingular matrix $P$ (depending on $z_0$) such that for every $v \in \mbb{C}^n$,
\begin{equation}\label{A1}
v^t {P}^{-1}\,\ov{\ad} G_1 (P^*)^{-1}\ov v \leq \left(\frac{K_2}{K_1}\right)^{n-1} v^t P^{-1}\,\ov{\ad}G_2 (P^*)^{-1} \ov v,
\end{equation}
where by $\ad A$ and $A^*$, we mean the adjugate and the conjugate transpose of the matrix $A$ respectively. Now consider $f \in A^2(D_1)$ such that $\|f\|_{D_1}=1, f(z_0)=f'(z_0)=0$.
We can write $f''(z_0)=(P^t)^{-1}A$ for some matrix $A$. Putting $v=Au$ in (\ref{A1}) and using the fact that $f''(z_0)$ is symmetric, we get
\begin{equation}\label{A2}
u^t f''(z_0)\,\ov{\ad} G_1 \ov{f''}(z_0)\ov u \leq \left(\frac{K_2}{K_1}\right)^{n-1}\, u^t f''(z_0)\ov{\ad}G_2 \ov{f''}(z_0) \ov u.
\end{equation}
Define $g: D_2 \to \mf{C}$ by
\[
 g(z)=\frac{f(z)}{\|f\|_{D_2}}.
 \]
Then $g\in A^2(D_2)$, $\|g\|_{D_2}=1$, $g(z_0)=g'(z_0)=0$. Since $f''(z_0)=\|f\|_{D_2} g''(z_0)$, $\|f\|_{D_2}\leq 1$, and $\text{ad}\,G_i=(\text{det}\,G_i)G_i^{-1}$, we have from (\ref{A2})
\begin{multline}\label{A3}
u^t f''(z_0)\,\ov{G}_1^{-1} \ov{f''}(z_0) \ov u \leq \left(\frac{K_2}{K_1}\right)^{n-1}\frac{\text{det}\,G_2}{\text{det}\,G_1}\big(u^t g''(z_0)\,\ov{G}_2^{-1} \ov{g''}(z_0) \ov u\big) \\
\leq \left(\frac{K_2}{K_1}\right)^{n}\frac{J_2}{J_1}I_{D_2}(z_0,u).
\end{multline}
Taking supremum over $f$ in \eqref{A3} and using Proposition~\ref{F_metric}, we obtain \eqref{I_1-I_2} and hence (a) is proved.

(b) Let $Q$ be as in Lemma~\ref{S-D}. Let $e_j$ denote the j-th standard basis vector in $\mbb{C}^n$, i.e., $e_j=(0,\ldots,0,1,0,\ldots,0)$. Taking $u=Qe_j$ in (a), we get
\begin{equation}\label{A5} 
d_j \leq \left(\frac{K_2}{K_1}\right)^{n+1}\frac{J_2}{J_1}\quad \text{for}\quad j=1,\ldots,n.
\end{equation}
From Lemma \ref{S-D}, it follows that
\begin{equation*}
Q^{-1}\big(\ov{\ti{G_1}}\big)^{-1}(Q^{*})^{-1}=\text{diag}\left\{\frac{1}{d_1},\ldots,\frac{1}{d_n}\right\}\quad \text{and} \quad Q^{-1} \big(\ov{\ti{G_2}}\big)^{-1}(Q^{*})^{-1}=\mathbb{I}.
\end{equation*}
Hence for any $v\in \mbb{C}^n$, using the inequality (\ref{A5}) we get
\begin{multline*}
v^tQ^{-1}\big(\ov{\ti{G_1}}\big)^{-1}(Q^{*})^{-1} \ov v = v^t \diag \left\{\frac{1}{d_1},\ldots,\frac{1}{d_n}\right\}\ov v \geq \left(\frac{K_1}{K_2}\right)^{n+1}\frac{J_1}{J_2} (v^t \mathbb{I}\ov v)\\
=\left(\frac{K_1}{K_2}\right)^{n+1}\frac{J_1}{J_2} \big( v^t Q^{-1} \big(\ov{\ti{G_2}}\big)^{-1}(Q^{*})^{-1} \ov v\big).
\end{multline*}
Putting $u=(Q^t)^{-1} v$ in the above inequality, we get (b).\qed
\end{proof}

We now introduce two maximal domain functions on planar domains for the purpose of localizing the Gaussian curvature of the Kobayashi--Fuks metric. For a bounded domain $D \subset \mathbb{C}$, let
\begin{align*}
I'_{D}(z_0) &=\sup\{g^{-1}_{\ti B, D}(z_0)|f'(z_0)|^2: f \in A^2(D), \|f\|_{D}=1, f(z_0)=0\},\\
I''_{D}(z_0) &=\sup\{g^{-3}_{\ti B, D}(z_0)|f'''(z_0)|^2: f \in A^2(D), \|f\|_{D}=1, f(z_0)=f'(z_0)=f''(z_0)=0\}.
\end{align*}
Note that, as $D$ is bounded, the functions $I'_{D}$ and $I''_{D}$ are well-defined and strictly positive. It is also evident that the supremums are achieved. Moreover, under biholomorphisms they transform by the same rule as that of the Bergman kernel which we establish in the following proposition:
\begin{prop}\label{tr-rule-I'-I''} Let $F: D_1 \to D_2$ be a biholomorphism between two bounded domains in $\mf{C}$. Then
\begin{equation*}
I'_{D_1}(z_0)=I'_{D_2}\big(F(z_0)\big) \big| F'(z_0) \big|^2\quad \text{and}\quad I''_{D_1}(z_0)=I''_{D_2}\big(F(z_0)\big)\big|F'(z_0)\big|^2.
\end{equation*}
\end{prop}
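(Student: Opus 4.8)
The plan is to exploit the biholomorphic invariance of the Kobayashi--Fuks metric, which in dimension one says precisely that $g_{\ti B, D_1}(z_0) = g_{\ti B, D_2}\big(F(z_0)\big)\,|F'(z_0)|^2$; this is just \eqref{tr-kf} written in the scalar case. The strategy is then to show that pulling back an extremal competitor for one domain function via $F$ produces a valid competitor for the other with the claimed behavior of the derivatives, so that the supremums match up to the factor $|F'(z_0)|^2$. Since the same argument applies to $F^{-1}$, we get the reverse inequality and hence equality.

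In more detail, for $I'_{D_2}$ fix an extremal $f \in A^2(D_2)$ with $\|f\|_{D_2} = 1$, $f\big(F(z_0)\big) = 0$ and $g^{-1}_{\ti B, D_2}\big(F(z_0)\big)|f'(F(z_0))|^2 = I'_{D_2}\big(F(z_0)\big)$. Set $\tilde f = (f \circ F)\, F'$, which is the standard isometric pull-back on $A^2$: the change of variables formula gives $\|\tilde f\|_{D_1} = \|f\|_{D_2} = 1$, and clearly $\tilde f(z_0) = f\big(F(z_0)\big) F'(z_0) = 0$. Computing $\tilde f'(z_0)$ by the product and chain rules, the term involving $f\big(F(z_0)\big) F''(z_0)$ vanishes, leaving $\tilde f'(z_0) = f'\big(F(z_0)\big) F'(z_0)^2$. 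Therefore
\begin{align*}
g^{-1}_{\ti B, D_1}(z_0)\,|\tilde f'(z_0)|^2
&= g^{-1}_{\ti B, D_2}\big(F(z_0)\big)\,|F'(z_0)|^{-2}\,\big|f'\big(F(z_0)\big)\big|^2\,|F'(z_0)|^4 \\
&= I'_{D_2}\big(F(z_0)\big)\,|F'(z_0)|^2,
\end{align*}
so $I'_{D_1}(z_0) \geq I'_{D_2}\big(F(z_0)\big)|F'(z_0)|^2$; applying the same reasoning to $F^{-1}$ gives the opposite inequality and hence the first identity.

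For $I''_{D_1}$ the idea is identical but the bookkeeping is heavier: one again pulls back an extremal $f$ for $I''_{D_2}$ via $\tilde f = (f\circ F) F'$, checks $\|\tilde f\|_{D_1} = 1$, and verifies that the vanishing conditions $f = f' = f''= 0$ at $F(z_0)$ force $\tilde f(z_0) = \tilde f'(z_0) = \tilde f''(z_0) = 0$ (each lower-order derivative of $\tilde f$ is a polynomial in $F', F'', F''', \ldots$ with coefficients that are derivatives of $f$ up to that order evaluated at $F(z_0)$, all of which vanish). The one surviving piece is $\tilde f'''(z_0) = f'''\big(F(z_0)\big) F'(z_0)^4$, because every other term in the Fa\`a di Bruno expansion carries a factor of $f$, $f'$, or $f''$ at $F(z_0)$. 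Then $g^{-3}_{\ti B, D_1}(z_0)|\tilde f'''(z_0)|^2 = g^{-3}_{\ti B, D_2}\big(F(z_0)\big)|F'(z_0)|^{-6}\,|f'''(F(z_0))|^2 |F'(z_0)|^8 = I''_{D_2}\big(F(z_0)\big)|F'(z_0)|^2$, and symmetry in $F \leftrightarrow F^{-1}$ finishes it.

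The main obstacle — really the only nonroutine point — is the claim that after imposing the vanishing of $f, f', f''$ at $F(z_0)$, the third derivative of the pull-back $\tilde f = (f\circ F)F'$ at $z_0$ collapses to the single clean term $f'''\big(F(z_0)\big)F'(z_0)^4$. This requires organizing the Leibniz/chain-rule expansion carefully and observing that all cross terms are annihilated by the vanishing conditions; it is the analog of the familiar fact that $f''(z_0)$ transforms as a genuine tensor only when $f(z_0) = f'(z_0) = 0$, extended one order higher. Everything else — the $L^2$-isometry of $f \mapsto (f\circ F)F'$ and the scalar transformation rule for $g_{\ti B}$ — is already available or immediate.
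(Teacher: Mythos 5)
Your proposal is correct and follows essentially the same route as the paper: pull back a maximizer via $\tilde f=(f\circ F)F'$, use the $L^2$-isometry of this map, check that the vanishing conditions kill all cross terms so that $\tilde f'(z_0)=f'(F(z_0))F'(z_0)^2$ (resp.\ $\tilde f'''(z_0)=f'''(F(z_0))F'(z_0)^4$), invoke the scalar transformation rule $g_{\ti B,D_1}(z_0)=g_{\ti B,D_2}(F(z_0))|F'(z_0)|^2$, and then apply the same argument to $F^{-1}$ for the reverse inequality. The derivative computations you spell out are exactly the ones the paper leaves as "straightforward to check," so there is no substantive difference.
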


\begin{proof}
We will prove the transformation rule only for $I''_{D}$, as the case of $I'_{D}$ is even simpler and follows from similar arguments. Suppose $g \in A^2(D_2)$ is a maximizer for $I''_{D_2}\big(F(z_0)\big)$. Now set
\[
f(z)=g\big(F(z)\big)F'(z).
\]
It is straightforward to check that $\|f\|_{D_1}=\|g\|_{D_2}=1$, $f(z_0)=f'(z_0)=f''(z_0)=0$, and
\[
f'''(z_0)=g'''\big(F(z_0)\big)\big(F'(z_0)\big)^4.
\]
Therefore,
\begin{equation}\label{identity-1}
g^{-3}_{\ti B, D_1}(z_0)\big|f'''(z_0)\big|^2=g^{-3}_{\ti B, D_1}(z_0)  \big|g'''\big(F(z_0)\big)\big|^2 \big|F'(z_0)\big|^8.
\end{equation}
Note that from the transformation rule for the Kobayashi--Fuks metric, we have
\[
g^{-1}_{\ti B, D_1}(z_0)|F'(z_0)|^2=g^{-1}_{\ti B, D_2}\big(F(z_0)\big).
\]
Applying this on the right hand side of (\ref{identity-1}), we get
\[
g^{-3}_{\ti B, D_1}(z_0)|f'''(z_0)|^2=g^{-3}_{\ti B, D_2}\big(F(z_0)\big)\big|g'''\big(F(z_0)\big)\big|^2 \big|F'(z_0)\big|^2.
\]
As $f$ is a candidate for $I''_{D_1}(z_0)$ and $g$ is a maximizer for $I''_{D_2}(F(z_0))$, we obtain
\[
I''_{D_1}(z_0) \geq I''_{D_2}\big(F(z_0)\big)\,|F'(z_0)|^2.
\]
Similar arguments when applied to the map $F^{-1}:D_2\to D_1$ gives the reverse inequality and hence it is an equality.\qed
\end{proof}

The main ingredient for the localization of the Gaussian curvature of the Kobayashi--Fuks metric is the following Bergman--Fuks type result:
\begin{prop}\label{Sec curv n=1}
Let $D\subset \mathbb{C}$ be a bounded domain and $z_0 \in D$. Then the Gaussian curvature of the Kobayashi--Fuks metric on $D$ satisfies
\begin{equation}\label{R-I}
R_{\ti B, D}(z_0)=2-\dfrac{I'_{D}(z_0)}{K_{D}(z_0)}-\dfrac{I''_{D}(z_0)}{I'_{D}(z_0)}\,.
\end{equation}
\end{prop}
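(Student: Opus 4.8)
The plan is to funnel everything through the sequence of extremal jet quantities
\[
\La_j=\La_j(z_0)=\sup\bigl\{\,|f^{(j)}(z_0)|^{2}: f\in A^2(D),\ \|f\|_D=1,\ f^{(i)}(z_0)=0\ \text{for}\ i<j\,\bigr\},\qquad j=0,1,2,3,
\]
and to reduce \eqref{R-I} to a single determinant identity. Let $K^{(i)}_z\in A^2(D)$ represent the functionals $f\mapsto f^{(i)}(z)$ and put $D_j(z)=\det\bigl(\langle K^{(i)}_z,K^{(k)}_z\rangle\bigr)_{0\le i,k\le j}$, with $D_{-1}:=1$; these are positive since, $D$ being bounded, $K^{(0)}_z,\dots,K^{(3)}_z$ are linearly independent. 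The standard distance-to-a-subspace formula gives $\La_j=D_j/D_{j-1}$, hence $D_j=\La_0\La_1\cdots\La_j$ and $K_D=\La_0=D_0$. Since $n=1$, the K\"ahler potential of $ds^2_{\ti B,D}$ is $\log\bigl(K_D^{2}g_{B,D}\bigr)$, and directly from the definitions of $I'_D,I''_D$ we have $I'_D(z_0)=g_{\ti B,D}^{-1}(z_0)\La_1$ and $I''_D(z_0)=g_{\ti B,D}^{-3}(z_0)\La_3$.

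The key step is the Pl\"ucker-type identity
\[
\pa_z\pa_{\ov z}\log D_j(z)=\frac{D_{j+1}(z)\,D_{j-1}(z)}{D_j(z)^{2}},\qquad j=0,1,2 .
\]
To prove it, note that $z\mapsto\ov{K^{(i)}_z}$ is a holomorphic curve in the conjugate Hilbert space $\ov{A^2(D)}$ with $\tfrac{d}{dz}\ov{K^{(i)}_z}=\ov{K^{(i+1)}_z}$ (indeed $\langle\ov{K^{(i)}_z},\ov g\rangle=g^{(i)}(z)$ for $g\in A^2(D)$), so that $\Phi_j(z):=\ov{K^{(0)}_z}\wedge\cdots\wedge\ov{K^{(j)}_z}$ is a nowhere-vanishing holomorphic curve in $\La^{j+1}\bigl(\ov{A^2(D)}\bigr)$ with $\|\Phi_j(z)\|^{2}=D_j(z)$ and $\Phi_j'(z)=\ov{K^{(0)}_z}\wedge\cdots\wedge\ov{K^{(j-1)}_z}\wedge\ov{K^{(j+1)}_z}$ (the remaining terms of the derivative carry a repeated factor). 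Using the elementary identity $\pa_z\pa_{\ov z}\log\|\Phi\|^{2}=\|\Phi\wedge\Phi'\|^{2}/\|\Phi\|^{4}$ for a nowhere-vanishing holomorphic curve $\Phi$, the claim reduces to $\|\Phi_j\wedge\Phi_j'\|^{2}=D_{j+1}D_{j-1}$; expanding $\|\Phi_j\wedge\Phi_j'\|^{2}=\|\Phi_j\|^{2}\|\Phi_j'\|^{2}-|\langle\Phi_j,\Phi_j'\rangle|^{2}$ and writing each factor as a minor of the $(j+2)\times(j+2)$ Gram matrix of $K^{(0)}_z,\dots,K^{(j+1)}_z$, one recognizes exactly the $2\times2$ determinant that Sylvester's determinant identity — with distinguished $j\times j$ principal block on the indices $0,\dots,j-1$ — evaluates as $D_{j-1}D_{j+1}$. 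As a first consequence (case $j=0$), $g_{B,D}=\pa_z\pa_{\ov z}\log K_D=\pa_z\pa_{\ov z}\log D_0=D_1/D_0^{2}$, so $K_D^{2}g_{B,D}=D_1$ and hence $g_{\ti B,D}=\pa_z\pa_{\ov z}\log D_1=D_2D_0/D_1^{2}=\La_2/\La_1$.

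Everything then reduces to algebra. From $g_{\ti B,D}=\La_2/\La_1$ we get $I'_D/K_D=\La_1^{2}/(\La_0\La_2)$ and $I''_D/I'_D=\La_1\La_3/\La_2^{2}$. On the other hand $\log g_{\ti B,D}=\log D_2+\log D_0-2\log D_1$, so applying the Pl\"ucker identity to each term gives $\pa_z\pa_{\ov z}\log g_{\ti B,D}=D_3D_1/D_2^{2}+D_1/D_0^{2}-2D_2D_0/D_1^{2}=\La_3/\La_2+\La_1/\La_0-2\La_2/\La_1$; dividing by $-g_{\ti B,D}=-\La_2/\La_1$ yields the Gaussian curvature $R_{\ti B,D}(z_0)=2-\La_1^{2}/(\La_0\La_2)-\La_1\La_3/\La_2^{2}$, which is \eqref{R-I}.

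I expect the Pl\"ucker identity to be the real obstacle: one has to recognize the right projective-geometric picture — that $\log(K_D^{2}g_{B,D})$ is, up to conjugation, a Fubini--Study potential pulled back by the first osculating curve of the Bergman embedding of $D$ — and then match the precise index pattern so that Sylvester's identity applies, while keeping track of the conjugate Hilbert space and of the non-vanishing of the $\Phi_j$ (which uses that polynomials lie in $A^2(D)$). Everything else is routine: $K_D=\La_0$, $\La_1=K_D g_{B,D}$, $\La_j=D_j/D_{j-1}$ are classical, and the final computation is bookkeeping. A more pedestrian alternative would be to use the invariance \eqref{tr-kf} to normalize coordinates at $z_0$ and expand $\pa_z\pa_{\ov z}\log g_{\ti B,D}$ directly in the Taylor coefficients of $K_D$, but the calculation is considerably heavier.
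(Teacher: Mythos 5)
Your argument is correct, but it follows a genuinely different route from the paper. You package everything into the Gram determinants $D_j(z)$ of the jet-evaluation vectors $K^{(0)}_z,\dots,K^{(j)}_z$ and the extremal quantities $\La_j=D_j/D_{j-1}$, prove the Pl\"ucker-type identity $\pa_z\pa_{\ov z}\log D_j=D_{j+1}D_{j-1}/D_j^2$ via the osculating curves $\Phi_j$ and the Desnanot--Jacobi/Sylvester identity on complementary minors of the Hermitian Gram matrix, and then the proposition reduces to the algebra $g_{\ti B,D}=\La_2/\La_1$, $I'_D/K_D=\La_1^2/(\La_0\La_2)$, $I''_D/I'_D=\La_1\La_3/\La_2^2$, and $R_{\ti B,D}=2-\La_1^2/(\La_0\La_2)-\La_1\La_3/\La_2^2$; all the ingredients (linear independence of the jets on a bounded domain, $\La_j=D_j/D_{j-1}$, $D_1=K_D^2g_{B,D}$, the Fubini--Study-type formula $\pa_z\pa_{\ov z}\log\|\Phi\|^2=\|\Phi\wedge\Phi'\|^2/\|\Phi\|^4$) are standard and correctly invoked, and your final identity agrees with \eqref{R-I} after translating $I'_D=g_{\ti B,D}^{-1}\La_1$, $I''_D=g_{\ti B,D}^{-3}\La_3$. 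The paper instead works in normal coordinates for the Kobayashi--Fuks metric at $z_0$, builds an orthonormal basis $\{h_0,\phi,\psi,h_1,\dots\}$ adapted to the filtration $A_1(z_0)\supset A_2(z_0)\supset A_3(z_0)$, expands $A=K_D^2g_{B,D}$ and its derivatives in that basis, exploits the normal-coordinate relations (which yield $\psi''(z_0)=\phi'(z_0)$ and the vanishing of $h_0\psi'''+h_0'\phi'-2h_0\phi''$) to collapse a lengthy computation of $-\pa^2 g_{\ti B,D}/\pa z\pa\ov z$, and then identifies the surviving terms with $I'_D$ and $I''_D$ through a separate lemma. Your approach buys brevity, coordinate-freeness, and structural insight (the potential $\log D_1$ as a pulled-back Fubini--Study potential, and a clean curvature formula in terms of the $\La_j$ that suggests how one might attack higher-order analogues), at the cost of importing the Hilbert-space exterior algebra and the Jacobi minor identity; the paper's computation is heavier but entirely elementary and self-contained, and its adapted basis and the identification lemma are reused elsewhere in the localization argument. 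Minor points you should make explicit if you write this up: strong (not just weak) holomorphy of $z\mapsto\ov{K^{(i)}_z}$, and the positivity of $D_j$ for $j\le 3$ coming from the fact that polynomials lie in $A^2(D)$ -- both routine, as you indicate.
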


Observe that both the sides of \eqref{R-I} are invariant under biholomorphisms and we will establish their equality by computing them in terms of a suitable orthonormal basis of $A^2(D)$ in some special coordinates. To this end, we fix $z_0 \in D$ and consider the closed subspaces of $A^2(D)$ given by
\begin{align*}
A_1(z_0)&=\big\{f\in A^2(D):f(z_0)=0\big\},\\
A_2(z_0)&=\big\{f\in A^2(D):f(z_0)=f'(z_0)=0\big\},\\
A_3(z_0)&=\big\{f\in A^2(D):f(z_0)=f'(z_0)=f''(z_0)=0\big\}.
\end{align*}
Observe that the orthogonal complement of $A_1(z_0)$ in $A^2(D)$ has dimension one and let $h_0$ be a unit vector in this orthogonal complement. It is easy to see that the orthogonal complement of $A_2(z_0)$ in $A_1(z_0)$ has dimension at most one. Since $D$ is bounded, this space contains the function $z-z_0$, and hence its dimension is exactly one. Similarly, the orthogonal complement of $A_3(z_0)$ in $A_2(z_0)$ has dimension one. Let $\{\phi,\psi,h_1,h_2,\ldots\}$ be an orthonormal basis for $A_1(z_0)$ such that $\phi$ is a unit vector in $A_1(z_0)\setminus A_2(z_0)$, $\psi$ is a unit vector in $A_2(z_0)\setminus A_3(z_0)$, and $\{h_1,\ldots, h_j,\ldots\}$ is an orthonormal basis for $A_3(z_0)$. Note that
\begin{equation}\label{Kernel}
K_{D}(z)=\big|h_0(z)\big|^2+\big|\phi(z)\big|^2+\big|\psi(z)\big|^2+\sum\limits_{j=1}^{\infty}\big|h_j(z)\big|^2,\quad z\in D.
\end{equation}
Hence $K_{D}(z_0)=\big|h_0(z_0)\big|^2$, which in particular implies $h_0(z_0)\neq 0$.

\begin{lem}\label{properties_I}
In normal coordinates for the Kobayashi--Fuks metric at $z_0$,
\begin{itemize}
\item [\em{(a)}] $I'_{D}(z_0)=\left|\phi'(z_0)\right|^2$, and
\item [\em{(b)}] $I''_{D}(z_0)=\sum_{j=1}^{\infty}\left \vert h_j'''(z_0)\right\vert^2$.
\end{itemize}
\end{lem}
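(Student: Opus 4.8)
The plan is to reduce both identities to elementary extremal computations in the Hilbert space $A^2(D)$. First I would note that in normal coordinates for $ds^2_{\ti B,D}$ at $z_0$ one has $g_{\ti B,D}(z_0)=1$, so the definitions of $I'_D$ and $I''_D$ simplify to
\[
I'_D(z_0)=\sup\big\{|f'(z_0)|^2:\ f\in A_1(z_0),\ \|f\|_D=1\big\},\qquad I''_D(z_0)=\sup\big\{|f'''(z_0)|^2:\ f\in A_3(z_0),\ \|f\|_D=1\big\}.
\]
The idea is then to expand a competitor $f$ in the orthonormal basis $\{\phi,\psi,h_1,h_2,\dots\}$ of $A_1(z_0)$ and exploit the vanishing conditions defining $A_2(z_0)$ and $A_3(z_0)$.

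For (a), I would take $f\in A_1(z_0)$ with $\|f\|_D=1$ and write $f=\langle f,\phi\rangle\phi+\langle f,\psi\rangle\psi+\sum_{j\ge1}\langle f,h_j\rangle h_j$. Since $\psi\in A_2(z_0)$ and each $h_j\in A_3(z_0)\subset A_2(z_0)$, the functions $\psi,h_1,h_2,\dots$ all vanish to order at least two at $z_0$, so differentiating once at $z_0$ gives $f'(z_0)=\langle f,\phi\rangle\,\phi'(z_0)$. The term-by-term differentiation is legitimate because the expansion converges in $A^2(D)$ and, $D$ being bounded, $g\mapsto g'(z_0)$ is a continuous linear functional on $A^2(D)$. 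From $|\langle f,\phi\rangle|\le\|f\|_D=1$ I then get $|f'(z_0)|^2\le|\phi'(z_0)|^2$, with equality exactly when $f$ is a unimodular multiple of $\phi$; since $\phi\in A_1(z_0)\setminus A_2(z_0)$ forces $\phi'(z_0)\neq0$, this yields $I'_D(z_0)=|\phi'(z_0)|^2$.

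For (b), I would run the same argument one order higher. Given $f\in A_3(z_0)$ with $\|f\|_D=1$, write $f=\sum_{j\ge1}c_jh_j$ with $c_j=\langle f,h_j\rangle$ and $\sum_j|c_j|^2=1$; differentiating three times at $z_0$ (again justified by continuity of $g\mapsto g'''(z_0)$ on $A^2(D)$) gives $f'''(z_0)=\sum_j c_j h_j'''(z_0)$. The series $\sum_j|h_j'''(z_0)|^2$ is finite, being the squared norm of the Riesz representative in $A_3(z_0)$ of the functional $g\mapsto g'''(z_0)$, so Cauchy--Schwarz gives $|f'''(z_0)|^2\le\big(\sum_j|c_j|^2\big)\big(\sum_j|h_j'''(z_0)|^2\big)=\sum_j|h_j'''(z_0)|^2$, with equality for $c_j$ proportional to $\overline{h_j'''(z_0)}$ after normalization — a genuine unit vector of $A_3(z_0)$, consistent with the earlier remark that the supremum is attained. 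Hence $I''_D(z_0)=\sum_j|h_j'''(z_0)|^2$.

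I do not anticipate a genuine obstacle. The only point requiring care is the interchange of the infinite orthonormal expansion with the evaluation of derivatives at $z_0$ in both parts; I would justify it exactly as for the classical Bergman kernel, using that these expansions converge locally uniformly on $D$ together with all their derivatives, which in turn follows from the continuity of derivative point-evaluations on the Bergman space of a bounded domain. Once this is in place, both identities fall out of the extremal-problem computations above.
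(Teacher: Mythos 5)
Your proof is correct and follows essentially the same route as the paper: reduce to the normalized extremal problems using $g_{\ti B,D}(z_0)=1$, expand a competitor in the orthonormal basis $\{\phi,\psi,h_1,h_2,\dots\}$, and for (b) identify the supremum with $\sum_j|h_j'''(z_0)|^2$ — your Cauchy--Schwarz/Riesz-representative argument is just the paper's computation of the norm of the functional $a\mapsto\sum_j a_jH_j$ on $\ell_2$ phrased differently. Your explicit justification of term-by-term differentiation via continuity of derivative evaluations is a point the paper leaves implicit, and is fine.
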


\begin{proof}
(a) Observe that in normal coordinates at $z_0$, $I'_{D}$ is reduced to
\[
I'_{D}(z_0)=\sup \left\{|f'(z_0)|^2: f\in A_1(z_0), \|f\|_{D}=1\right\}.
\]
Since $\phi$ is a candidate for $I'_{D}(z_0)$,
\[
I'_{D}(z_0)\geq \left |\phi'(z_0)\right|^2.
\]
To see the reverse inequality, consider any $f\in A_1(z_0)$ with $\|f\|_{D}=1$. Since $f$ can be represented as
\[
f(z)=\langle f,\phi \rangle \,\phi(z)+\langle f,\psi \rangle \,\psi(z)+\sum_{j=1}^{\infty}\langle f,h_j \rangle \,h_j(z),
\]
using $\|f\|_{D}=1$, we have
\[
|f'(z_0)|^2=\big|\langle f,\phi \rangle\big|^2 \big|\phi'(z_0)\big|^2\leq \big|\phi'(z_0)\big|^2.
\]
Now taking supremum in the left hand side of the above inequality, we get
\[
I'_{D}(z_0) \leq \big|\phi'(z_0)\big|^2.
\]

(b) Clearly the right hand side is finite thanks to Cauchy estimates. Observe that in normal coordinates at $z_0$,
\[
I''_{D}(z_0)=\sup\left\{|f'''(z_0)|^2: f \in A_3(z_0), \|f\|_{D}=1\right\}.
\]
Note that if $f(z)=\sum_{j=1}^{\infty}a_jh_j(z)$ is an arbitrary member of $A_3(z_0)$, then
\[
\big|f'''(z_0)\big|^2=\left|\sum\limits_{j=1}^{\infty}a_j h_j'''(z_0) \right|^2=\left|\sum\limits_{j=1}^{\infty}a_jH_j\right|^2,
\]
where $H_j=h_j'''(z_0)$. Moreover, as $A_3(z_0)$ is linearly isometric to $\ell_2$ via the orthonormal basis $\{h_j\}$, we also have $\|f\|_D=\|a\|_{\ell_2}$, where $a=\{a_j\}_{j\geq 1}$.
Hence, we arrive at
\begin{equation}\label{I''-l_2}
I''_{D}(z_0)=\sup\left\{\bigg|\sum\limits_{j=1}^{\infty}a_jH_j\bigg|^2: a=\{a_j\}_{j\geq 1}\in \ell_2, \|a\|=1\right\}.
\end{equation}
Now let $H=\{H_j\}_{j=1}^{\infty}$ and define $L_H : \ell_2 \to \mf{C}$ by
\[
L_H(a)=\sum_{j=1}^{\infty} a_jH_j.
\]
Then $L_H$ is a bounded linear operator on $\ell_2$ and denoting its operator norm by $\|L_H\|$, observe that
\begin{equation}\label{L_H-I''}
\|L_H\|^2=\sup\limits_{\|a\|=1}\Big|L_H\big(\{a_j\}\big)\Big|^2=\sup\limits_{\|a\|=1}\bigg|\sum\limits_{j=1}^{\infty}a_jH_j\bigg|^2=I''_{D}(z_0)
\end{equation}
by \eqref{I''-l_2}. Also, from the canonical isometry of $\ell_2'$ with $\ell_2$, we have
\begin{equation}\label{L_H-H}
\|L_H\|^2 = \|H\|_{\ell_2}^2=\sum\limits_{j=1}^{\infty}|H_j|^2=\sum\limits_{j=1}^{\infty}\left|h_j'''(z_0)\right|^2.
\end{equation}
From \eqref{L_H-I''} and \eqref{L_H-H}, the lemma follows immediately.\qed
\end{proof}

\begin{proofof}{Proposition~\ref{Sec curv n=1}}
We work in normal coordinates for the Kobayashi--Fuks metric at $z_0$. Without loss of generality, we will denote the new coordinates by $z$, same as the previous ones. Note that in normal coordinates at $z_0$, we have
\begin{equation}\label{n-cor-g}
g_{\ti B,D}(z_0)=1, \quad \frac{\pa g_{\ti B,D}}{\pa z}(z_0)=0, \quad \text{and} \quad R_{\ti B,D}(z_0)= -\frac{\pa^2 g_{\ti B,D}}{\pa z \pa \ov z}(z_0).
\end{equation}
Next, we express the above equations in terms of the basis expansion of $K_{D}$. Recall that the K\"ahler potential of the Kobayashi--Fuks metric in dimension $1$ is $\log A(z)$ where
\begin{equation}\label{pot-A}
A=K_D^2g_{B,D}=K_D^2 \frac{\pa^2 \log K_D}{\pa z \pa \ov z}=K_D\tfrac{\pa^2 K_D}{\pa z \pa \ov z}-\tfrac{\pa K_D}{\pa z}\tfrac{\pa K_D}{\pa \ov z}.
\end{equation}
Thus
\begin{align}\label{Fuks_tensor}
g_{\ti B, D}=\frac{\pa^2 \log A}{\pa z \pa \ov z}=\dfrac{1}{A}\dfrac{\pa^2 A}{\pa z \pa \ov z}-\dfrac{1}{A^2}\dfrac{\pa A}{\pa z}\dfrac{\pa A}{\pa \ov z}.
\end{align}
Using the expansion of the Bergman kernel as in (\ref{Kernel}), we get from \eqref{pot-A}
\begin{equation}\label{der-A}
\begin{aligned}
A(z_0) & =\big|h_0(z_0)\big|^2\,\left|\phi'(z_0)\right|^2,\\
\frac{\pa A}{\pa z}(z_0) & = \big|h_0(z_0)\big|^2\,\ov{\phi'(z_0)}\,\phi''(z_0),\\
\dfrac{\pa^2 A}{\pa z\pa\ov z}(z_0)&=\big|h_0(z_0)\big|^2\left(\left|\phi''(z_0)\right|^2+\left|\psi''(z_0)\right|^2\right),\\
\dfrac{\pa^2 A}{\pa z^2}(z_0)& = \big|h_0(z_0)\big|^2 \ov{\phi'}(z_0)\phi'''(z_0)+\ov{h_0(z_0)}\,h_0'(z_0)\ov{\phi'}(z_0)\phi''(z_0)\\ &\hspace{4cm}-\ov{h_0(z_0)}\,h_0''(z_0)\left\vert\phi'(z_0)\right\vert^2, \quad \text{and}\\
\dfrac{\pa^3 A}{\pa z^2 \pa\ov z}(z_0)&=\big|h_0(z_0)\big|^2\bigg(\ov{\phi''}(z_0)\phi'''(z_0)+\ov{\psi''}(z_0)\psi'''(z_0)\bigg)-\ov{h_0(z_0)}\,h_0''(z_0)\phi'(z_0)\ov{\phi''}(z_0)\\
&\hspace{3cm}+\ov{h_0(z_0)}\,h_0'(z_0)\left(\left\vert\phi''(z_0)\right\vert^2+\left\vert\psi''(z_0)\right\vert^2\right).
\end{aligned}
\end{equation}
Also, differentiating (\ref{Fuks_tensor}) one immediately obtains
\begin{equation}\label{relation_3}
\dfrac{\pa g_{\ti B,D}}{\pa z}=\dfrac{1}{A}\dfrac{\pa^3 A}{\pa z^2 \pa\ov z}-\dfrac{2}{A^2}\dfrac{\pa A}{\pa z}\dfrac{\pa^2 A}{\pa z \pa\ov z}-\dfrac{1}{A^2}\dfrac{\pa A}{\pa \ov z}\dfrac{\pa^2 A}{\pa z^2}+\dfrac{2}{A^3}\dfrac{\pa A}{\pa \ov z}\bigg(\dfrac{\pa A}{\pa z}\bigg)^2,
\end{equation}
and
\begin{multline}\label{2nd-der-g}
-\dfrac{\pa^2 g_{\ti B,D}}{\pa z\pa \ov{z}}(z) =-\dfrac{1}{A}\dfrac{\pa^4 A}{\pa z^2 \pa \ov z^2}+\dfrac{2}{A^2}\dfrac{\pa A}{\pa\ov z}\dfrac{\pa^3 A}{\pa z^2 \pa \ov z}+\dfrac{2}{A^2}\dfrac{\pa A}{\pa z}\dfrac{\pa^3 A}{\pa z \pa \ov z^2}\\
+\dfrac{2}{A^2}\left(\dfrac{\pa^2 A}{\pa z \pa \ov z}\right)^2+\dfrac{1}{A^2}\dfrac{\pa^2 A}{\pa z^2}\dfrac{\pa^2 A}{\pa \ov z^2}
+\dfrac{6}{A^4}\left(\dfrac{\pa A}{\pa z}\right)^2\left(\dfrac{\pa A}{\pa \ov z}\right)^2\\
-\dfrac{2}{A^3}\left(\dfrac{\pa A}{\pa \ov z}\right)^2\dfrac{\pa^2 A}{\pa z^2}
-\dfrac{2}{A^3}\left(\dfrac{\pa A}{\pa z}\right)^2\dfrac{\pa^2 A}{\pa \ov z^2}-\dfrac{8}{A^3}\dfrac{\pa A}{\pa z}\dfrac{\pa A}{\pa \ov z}\dfrac{\pa^2 A}{\pa z \pa\ov z}.
\end{multline}

Now the relation $g_{\ti B, D}(z_0)=1$ using \eqref{Fuks_tensor} and \eqref{der-A} gives
\[
\big|h_0(z_0)\big|^4 \left|\phi'(z_0)\right|^2 \left|\psi''(z_0)\right|^2=\big|h_0(z_0)\big|^4 \left|\phi'(z_0)\right|^4.
\]
Observe that $h_0(z_0)\neq 0$ and $\phi'(z_0)\neq 0$. Hence the above identity implies
\[
\left|\psi''(z_0)\right|^2=\left|\phi'(z_0)\right|^2.
\]
As a consequence, applying a unitary transformation to the basis $\{\psi\}$, we may assume that
\begin{equation}\label{relation_1}
\psi''(z_0)=\phi'(z_0).
\end{equation}

The relation $\tfrac{\pa g_{\ti B,D}}{\pa z}(z_0)=0$ using \eqref{relation_3} and \eqref{der-A} gives
\begin{multline*}
\big|h_0(z_0)\big|^2\,\left|\phi'(z_0)\right|^2\,\ov{\psi''}(z_0)\,\psi'''(z_0)+ \ov{h_0(z_0)}\,h_0'(z_0)\,\left|\phi'(z_0)\right|^2\,\left|\psi''(z_0)\right|^2\\
-2\big|h_0(z_0)\big|^2\,\ov{\phi'}(z_0)\,\phi''(z_0)\,\left|\psi''(z_0)\right|^2=0.
\end{multline*}
Using (\ref{relation_1}) in the above sum, we obtain the relation
\begin{equation}\label{relation_2}
h_0(z_0)\,\psi'''(z_0)+h_0'(z_0)\,\phi'(z_0)-2h_0(z_0)\,\phi''(z_0)=0.
\end{equation}

Finally, we compute the curvature. It follows from \eqref{2nd-der-g}, by a straightforward but lengthy calculation, that
\begin{align*}
A^4&\dfrac{-\pa^2 g_{\ti B,D}}{\pa z \pa\ov z}=-|h_0|^8\vert\phi'\vert^6\Big(|\psi'''|^2+\sum_{j=1}^{\infty} | h_j'''|^2\Big)-2|h_0|^6\,|\phi'|^6\,\text{Re}\left(h_0\ov{h_0'\psi''}\psi'''\right)\\
&-|h_0|^6\,\vert h_0'\vert^2\,\vert\phi'\vert^6\,\vert\psi''\vert^2-\vert h_0\vert^6\vert\phi'\vert^8\vert\psi''\vert^2+4\vert h_0\vert^8\vert\phi'\vert^4
\Re \left(\phi' \ov{\phi''}\ov{\psi''}\psi'''\right)\\
&+4\vert h_0\vert^6 \vert\phi'\vert^4\vert\psi'' \vert^2\Re \left(h_0\ov{h_0'\phi'}\phi''\right)+2\vert h_0\vert^8\vert\phi'\vert^4\vert\psi''\vert^4-4\vert h_0\vert^8\vert\phi'\vert^4\vert \phi''\vert^2\vert\psi''\vert^2
\end{align*}
at the point $z_0$. In the above equation and in the subsequent steps, if not mentioned, all the terms and partial derivatives are evaluated at the point $z_0$. Now making use of the relation (\ref{relation_1}), the above equation can be rewritten as
\begin{multline*}
\vert h_0\vert^8 \vert\phi'\vert^8\dfrac{-\pa^2 g_{\ti B,D}}{\pa z \pa\ov z}(z_0) = 2\vert h_0\vert^8\vert \phi' \vert^8-\vert h_0\vert^6 \vert \phi'\vert^{10}-\vert h_0\vert^8\vert \phi'\vert^6 \sum_{j=1}^{\infty} \vert h_j''' \vert^2 
-\vert h_0\vert^6\vert \phi' \vert^6 \Big\{ \vert h_0\vert^2 \vert \psi''' \vert^2 \\
+\vert h_0'\vert^2 \vert \phi' \vert^2 + 4 \vert h_0 \vert^2 \vert \phi'' \vert^2 + 2\Re \big(h_0\ov{h_0'\phi'}\psi''' \big)
-4 \vert h_0 \vert^2 \Re \big(\ov{\phi''}\psi''' \big)-4\Re \big(h_0\ov{h_0' \phi'}\phi'' \big)\Big\}.
\end{multline*}
Here one can see that, the terms inside the curly bracket above is exactly equal to
\[
\vert h_0 \psi''' +h_0' \phi' -2 h_0 \phi'' \vert^2,
\]
which vanishes by vitue of the relation (\ref{relation_2}). Hence we finally arrive at
\begin{equation}\label{Riem_curv}
R_{\ti B, D}(z_0)=-\dfrac{\pa^2 g_{\ti B,D}}{\pa z \pa\ov z}(z_0)=2-\dfrac{\big\vert \phi'(z_0) \big\vert^2}{\big\vert h_0(z_0) \big\vert^2}-\dfrac{1}{\big\vert \phi'(z_0) \big\vert^2}\sum_{j=1}^{\infty}\big\vert h_j'''(z_0)\big\vert^2.
\end{equation}
The right hand side of the above identity is finite thanks to the Cauchy estimates. The proposition now follows from Lemma~\ref{properties_I}. \qed
\end{proofof}

The following result is an immediate consequence of Proposition~\ref{Sec curv n=1}.
\begin{cor}
The Gaussian curvature of the Kobayashi--Fuks metric on  bounded planar domains is strictly bounded above by $2$.
\end{cor}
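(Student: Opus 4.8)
The plan is to read the bound off directly from the Bergman--Fuks type identity established in Proposition~\ref{Sec curv n=1}. First I would recall that for a bounded planar domain $D\subset\mathbb{C}$ and any $z_0\in D$, that proposition gives
\[
R_{\ti B, D}(z_0)=2-\dfrac{I'_{D}(z_0)}{K_{D}(z_0)}-\dfrac{I''_{D}(z_0)}{I'_{D}(z_0)}.
\]
Next I would invoke the elementary observations recorded just after the definitions of $I'_D$ and $I''_D$: since $D$ is bounded, both maximal domain functions are well-defined and strictly positive, the supremums being attained. Moreover $K_D(z_0)>0$; indeed, in the basis expansion \eqref{Kernel} one has $K_D(z_0)=|h_0(z_0)|^2>0$. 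Consequently both of the subtracted terms on the right-hand side are strictly positive, so $R_{\ti B,D}(z_0)<2$, and since $z_0\in D$ is arbitrary this is exactly the assertion of the corollary.

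The only point that needs a word of justification — and it is not a genuine obstacle — is the strict positivity of the maximal domain functions, since merely nonnegative values would give $R_{\ti B,D}(z_0)\le 2$ rather than the strict bound. It is enough to note that $I'_D(z_0)>0$: the function $z-z_0$ lies in $A^2(D)$ because $D$ is bounded, it belongs to $A_1(z_0)$, and its derivative at $z_0$ equals $1\ne 0$, so a suitable normalization of it is a competitor in the supremum defining $I'_D(z_0)$ with nonzero value. Hence $I'_D(z_0)/K_D(z_0)>0$ and the strict inequality follows regardless of whether one also tracks the positivity of $I''_D(z_0)$. No further difficulty arises, so the corollary is an immediate consequence of Proposition~\ref{Sec curv n=1}.
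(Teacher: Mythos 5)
Your argument is correct and is exactly the paper's: the paper states the corollary as an immediate consequence of Proposition~\ref{Sec curv n=1}, relying on the remark made right after the definitions of $I'_D$ and $I''_D$ that for a bounded planar domain these functions are well-defined and strictly positive (and $K_D>0$), so both subtracted terms in $R_{\ti B,D}(z_0)=2-I'_D(z_0)/K_D(z_0)-I''_D(z_0)/I'_D(z_0)$ are positive. Your explicit verification that $I'_D(z_0)>0$ via the normalized competitor $z-z_0$ is precisely the justification implicit in that remark, so nothing further is needed.
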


Now we localize the domain functions $I'_{D}$ and $I''_{D}$.

\begin{prop}\label{loc-I}
Let $D\subset \mathbb{C}$ be a bounded domain, $p^0 \in \pa D$ a local peak point, and $U$ a sufficiently small neighborhood of $p^0$. Then
\[
\lim\limits_{z\to p^0}\frac{I'_{D}(z)}{I'_{U \cap D}(z)}=\lim\limits_{z\to p^0}\frac{I''_{D}(z)}{I''_{U \cap D}(z)}=1.
\]
\end{prop}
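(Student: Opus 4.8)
The plan is to localize $I'_D$ and $I''_D$ separately, in each case mimicking the standard localization argument for the Bergman kernel near a local peak point. The key technical input I would use is the localization of the Bergman kernel \eqref{K-loc-KY}, the localization of the Kobayashi--Fuks volume element (Theorem~\ref{loc}~(II) applied in dimension one, which gives $g_{\ti B,D}(z)/g_{\ti B,U\cap D}(z)\to 1$), and a Cauchy-estimate bound relating a holomorphic function's higher derivatives at $z$ to its $L^2$-norm on a shrinking ball around $z$, together with the fact that $\de_D(z)=\de_{U\cap D}(z)$ for $z$ near $p^0$.

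First I would establish the easy inequality $I'_{U\cap D}(z)\ge c(z)\,I'_D(z)$ and similarly for $I''$: given a maximizer $f$ for $I'_D(z)$ (which exists, as noted before Proposition~\ref{tr-rule-I'-I''}), restrict it to $U\cap D$. Since $\|f\|_{U\cap D}\le\|f\|_D=1$, the function $f/\|f\|_{U\cap D}$ is a competitor for $I'_{U\cap D}(z)$ vanishing at $z$, so $I'_{U\cap D}(z)\ge g_{\ti B,D}(z)\,g^{-1}_{\ti B,U\cap D}(z)\,|f'(z)|^2/\|f\|^2_{U\cap D}\ge g_{\ti B,D}(z)\,g^{-1}_{\ti B,U\cap D}(z)\,I'_D(z)$; by the localization of $g_{\ti B,\cdot}$ this gives $\liminf_{z\to p^0} I'_{U\cap D}(z)/I'_D(z)\ge 1$. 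For $I''$ the same restriction argument works verbatim since the vanishing conditions $f(z)=f'(z)=f''(z)=0$ are preserved. Thus the content is entirely in the reverse direction.

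For the reverse inequality the plan is the Krantz--Yu / Ohsawa-style cutoff-and-correct argument: take a maximizer $g$ for $I'_{U\cap D}(z)$ (resp.\ $I''_{U\cap D}(z)$), multiply by a cutoff $\chi$ supported in $U$ and equal to $1$ near $p^0$, and solve a $\bar\partial$-problem with a weight that forces the solution and its relevant derivatives to vanish at $z$, so that $f=\chi g - v$ extends to a genuine element of $A^2(D)$ with $f(z)=f'(z)=0$ (and $f''(z)=0$ in the $I''$ case) while $f'(z)$ (resp.\ $f'''(z)$) is close to $g'(z)$ (resp.\ $g'''(z)$). Because $z\to p^0$ and $p^0$ is a peak point, the $L^2$-mass of $g$ concentrates near $p^0$, the cutoff error $\bar\partial(\chi g)$ is supported away from $p^0$ and is exponentially small relative to $K_D(z)$, and the H\"ormander estimate with a singular weight at $z$ of the form $(n+1+\epsilon)\log|\cdot-z|$ (here $n=1$) controls $v$ and up to three of its derivatives at $z$ by a quantity $o\big(\text{of the main term}\big)$; this is precisely the mechanism already used in \cite{Krantz-Yu} to prove \eqref{I-loc-KrY}. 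Normalizing and using $\|g\|_{D}\le 1+o(1)$ together with the localization of $g_{\ti B,\cdot}$ then yields $\limsup_{z\to p^0} I'_{U\cap D}(z)/I'_D(z)\le 1$ and likewise for $I''$, completing the proof.

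The main obstacle is the $\bar\partial$-correction with multiple vanishing conditions for $I''_D$: one must ensure the solution $v$ of $\bar\partial v = g\,\bar\partial\chi$ can be chosen to vanish to order three at $z$ while keeping $|v'''(z)|$ negligible, which requires a weight singular enough at $z$ (exponent slightly above $2\cdot 3 = 6$ in $\log|\cdot - z|$, or equivalently a high-order vanishing weight) yet still giving a finite $L^2$ bound after subtracting the concentrated mass of $g$ — the bookkeeping of constants here is the delicate part, though it is entirely parallel to the treatment of $I_D$ in \cite{Krantz-Yu}. A cleaner alternative worth mentioning is to invoke the transformation rule of Proposition~\ref{tr-rule-I'-I''} to first reduce to convenient coordinates at $p^0$, but the substance of the estimate is unchanged; I would therefore present the argument as a direct adaptation of the proof of Proposition~2.1 of \cite{Krantz-Yu}, indicating the two modifications (the extra vanishing order for $I''$, and replacing $\tau_{B,D}^2 K_D$ bounds by $g_{\ti B,D}$ bounds via Theorem~\ref{loc}~(II)) and leaving the routine $\bar\partial$-estimates to the reader.
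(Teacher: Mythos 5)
Your skeleton is the same as the paper's: one direction ($\limsup_{z\to p^0} I''_{D}/I''_{U\cap D}\le 1$, and likewise for $I'$) by restricting a competitor for $I''_{D}(z)$ to $U\cap D$ and invoking Theorem~\ref{loc}~(II), the other by extending a maximizer for $I''_{U\cap D}(z)$ via a cutoff and a H\"ormander $\bar\pa$-correction with a weight singular at $z$. But in the hard direction your described mechanism has a genuine gap. Solving $\bar\pa v=g\,\bar\pa\chi$ only gives $\|v\|_{D}\le C\|g\,\bar\pa\chi\|\le C'$, a fixed constant: there is no justification for your claims that the $L^2$-mass of the extremal function $g$ concentrates near $p^0$ or that the cutoff error is ``exponentially small relative to $K_D(z)$'', and with an $O(1)$ error the argument yields only $I''_{D}\ge c\,I''_{U\cap D}$ for some fixed $c<1$, i.e.\ two-sided comparability rather than ratio tending to $1$. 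The actual mechanism in \cite{Krantz-Yu}, reproduced in the paper, is to extend $\chi f h^k$ where $h$ is the local peak function: on $\supp\bar\pa\chi$ one has $|h|\le b<1$, so the $\bar\pa$-datum has norm $\le cb^k$, and since $|h(\zeta)|^{2k}\to 1$ as $\zeta\to p^0$ for each fixed $k$, one gets $\liminf_{\zeta\to p^0} I''_{D}(\zeta)/I''_{U\cap D}(\zeta)\ge (1+cb^k)^{-2}$ and then lets $k\to\infty$. Your proposal never introduces the powers $h^k$, so the quantity that is supposed to tend to $1$ never appears; this is the missing idea, not a routine detail.

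A second, smaller point: your weight exponents are too weak. An exponent ``slightly above $6$'' in $\log|\cdot-z|$ forces the (locally holomorphic) solution $v$ to vanish only to order $3$ at $z$, so $v'''(z)$ survives, and the Cauchy-estimate control of $|v'''(z)|$ you then need degenerates like $\de_D(z)^{-4}$ as $z\to p^0$ while $k$ stays fixed -- exactly the bookkeeping you flag as delicate, and it is not clear it closes. The paper sidesteps this by taking $\phi=8\log|\cdot-\zeta|$, which forces vanishing of all derivatives of the solution of order $\le 3$, so that $F_k'''(\zeta)=h^k(\zeta)f'''(\zeta)$ exactly (similarly for $I'$ with the lower-order conditions). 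Finally, your use of Theorem~\ref{loc}~(II) to compare $g_{\ti B,D}$ and $g_{\ti B,U\cap D}$ in the hard direction is legitimate (it is proved beforehand, so there is no circularity) and slightly streamlines the paper's route, which instead uses Proposition~\ref{Monotone} together with the localization of $K_D$ and $J_D$; but this does not repair the two issues above.
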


\begin{proof}
We will present the proof only for $I''_{D}$ here. The proof for $I'_{D}$ follows in an exactly similar manner. Let $h$ be a local holomorphic peak function for $p^0$ defined on a neighborhood $U$ of $p^0$. Shrinking $U$ if necessary, we can assume that $h$ is nonvanishing on $U\cap \ov{D}$.  Now choose any neighborhood $V$ of $p^0$ such that  $V\subset\subset U$. Then there is a constant $b \in (0,1)$ such that $|h|\leq b$ on $\ov{(U\setminus V)\cap D}$. Let us choose a cut-off function $\chi\in C_0^{\infty}(U)$ satisfying $\chi =1$ on $V$ and $0\leq \chi \leq 1$ on $U$. Given any $\z \in V$, a function $f \in A^2(U\cap D)$, and an integer $k\geq 1$, set
\[
\phi(z)=8\log \vert z - \z\vert \quad \text{and} \quad  \al_k=\bar{\pa}(\chi f h^k).
\]
Then $\phi$ is a subharmonic function on $\mf{C}$ and $\al_k$ is a $\ov \pa$-closed, smooth $(0,1)$-form on $D$ with $\supp \al_k\subset (U\setminus V)\cap D$. Now as in \cite{Krantz-Yu}, applying Theorem~4.2 of \cite{Hor}, we get a solution $u$ to the equation $\bar{\pa}u=\al_k$ on $D$ such that
\begin{equation}
\frac{\pa^{|A|+|B|}u}{\pa z^A \pa\ov z^B}(\zeta)=0 \text{ for all multi-indices } A,B \text{ with } |A|+|B|\leq 3,
\end{equation}
and \begin{equation}\label{B1}
\|u\|_{D}\leq cb^k\|f\|_{U \cap D},
\end{equation}
where $c$ is a constant independent of $\z$ and $k$.

Now let $f \in A^2(U \cap D)$ be a maximizing function for $I''_{U \cap D}(\zeta)$, i.e., $\|f\|_{U \cap D}=1$, $f(\zeta)=f'(\zeta)=f''(\zeta)=0$, and $g^{-3}_{\ti B,U \cap D}(\zeta)|f'''(\zeta)|^2=I''_{U \cap D}(\zeta)$. Choose $u$ as above and set $F_k=\chi fh^k-u$. Then $F_k \in A^2(D)$ and it follows from (\ref{B1}) that
\begin{equation} \label{B2}
\|F_k\|_{D}\leq \|\chi fh^k\|_{D}+\|u\|_{D} 
\leq \|f\|_{U \cap D}+cb^k\|f\|_{U \cap D}
=(1+cb^k).
\end{equation}
Therefore, setting $f_k=F_k/\|F_k\|_D$, we see that $f_k\in A^2(D)$, $\|f_k\|_{D}=1$, and $f_k(\zeta)=f_k'(\zeta)=f_k''(\zeta)=0$. Moreover, by the maximality of $I''_{D}(\zeta)$, estimate (\ref{B2}), and (b) of Proposition \ref{Monotone}, one obtains
\begin{align*}
I''_{D}(\zeta)&\geq g^{-3}_{\ti B,D}(\zeta)|f_k'''(\zeta)|^2\\
&=\|F_k\|_D^{-2}|h^k(\zeta)|^2\, g^{-3}_{\ti B,D}(\zeta)|f'''(\zeta)|^2\\
&\geq \|F_k\|_D^{-2}|h(\zeta)|^{2k}\left(\frac{K_{D}(\zeta)}{K_{U \cap D}(\zeta)}\right)^{6} \left(\frac{J_{D}(\zeta)}{J_{U \cap D}(\zeta)}\right)^3\,g^{-3}_{\ti B,U \cap D}(\zeta)|f'''(\zeta)|^2\\
&\geq \frac{|h(\zeta)|^{2k}}{(1+cb^k)^2}\left(\frac{K_{D}(\zeta)}{K_{U \cap D}(\zeta)}\right)^{6} \left(\frac{J_{D}(\zeta)}{J_{U \cap D}(\zeta)}\right)^3\,I''_{U \cap D}(\zeta).
\end{align*}
This implies that
\[
\frac{I''_{D}(\zeta)}{I''_{U \cap D}(\zeta)}\geq \frac{|h(\zeta)|^{2k}}{(1+cb^k)^2}\left(\frac{K_{D}(\zeta)}{K_{U \cap D}(\zeta)}\right)^{6} \left(\frac{J_{D}(\zeta)}{J_{U \cap D}(\zeta)}\right)^3.
\]
Note that $h(p^0)=1$. By \eqref{K-loc-KY}, Proposition~2.1, and Proposition~2.4 of \cite{Krantz-Yu},
\[
\lim_{\zeta\to p^0}\frac{K_{D}(\zeta)}{K_{U \cap D}(\zeta)}=\lim_{\zeta\to p^0}\frac{J_{D}(\zeta)}{J_{U \cap D}(\zeta)}=1.
\]
Hence, letting $\zeta \to p^0$ in the above inequality, we get
\[
\liminf\limits_{\zeta \to p^0}\frac{I''_{D}(\zeta)}{I''_{U \cap D}(\zeta)}\geq (1+cb^k)^{-2}.
\]
Now letting $k\to \infty$, as $0\leq b<1$, and $c$ is independent of $k$, we obtain
\begin{equation}\label{geq}
\liminf\limits_{\zeta \to p^0}\frac{I''_{D}(\zeta)}{I''_{U \cap D}(\zeta)}\geq 1.
\end{equation} 

On the other hand, consider a candidate function $\eta$ for $I''_{D}(\zeta)$, i.e., $\eta \in A^2(D)$, $\|\eta\|_{D}=1$, and $\eta(\zeta)=\eta'(\zeta)=\eta''(\zeta)=0$. Now for $z\in U \cap D$, let's define 
\[
\ga(z)=\dfrac{\eta(z)}{\|\eta\|_{U \cap D}}.
\]
Then clearly $\ga\in A^2(U \cap D)$ with $\|\ga\|_{U \cap D}=1$, and $\ga(\zeta)=\ga'(\zeta)=\ga''(\zeta)=0$. Therefore the maximality of $I''_{U \cap D}(\zeta)$ implies
\begin{multline*}
I''_{U \cap D}(\zeta)\geq g^{-3}_{\ti B, U \cap D}(\zeta)|\ga'''(\zeta)|^2 =\|\eta\|^{-2}_{U \cap D}\left(\dfrac{g_{\ti B, U \cap D}(\zeta)}{g_{\ti B, D}(\zeta)}\right)^{-3} g^{-3}_{\ti B, D}(\zeta)|\eta'''(\zeta)|^2\\
\geq \left(\dfrac{g_{\ti B, U \cap D}(\zeta)}{g_{\ti B, D}(\zeta)}\right)^{-3}I''_{D}(\zeta).
\end{multline*}
The last inequality above follows from $\|\eta\|_{U \cap D}\leq 1$ and the fact that $\eta$\, is an arbitrary candidate function for $I''_{D}(\zeta)$. Thus we obtain
\begin{equation}\label{inequality-2}
\frac{I''_{D}(\zeta)}{I''_{U \cap D}(\zeta)}\leq \left(\dfrac{g_{\ti B, U \cap D}(\zeta)}{g_{\ti B, D}(\zeta)}\right)^{3}.
\end{equation}
By (II) of Theorem~\ref{loc}, the right hand side converges to $1$ as $\z \to p^0$, and therefore,
\begin{align}\label{leq}
\limsup_{\zeta \to p^0}\frac{I''_{D}(\zeta)}{I''_{U \cap D}(\zeta)}\leq 1.
\end{align}
From \eqref{geq} and \eqref{leq}, we conclude that
\begin{equation*}
\lim\limits_{\zeta\to p^0} \frac{I''_{D}(\zeta)}{I''_{U \cap D}(\zeta)}=1,
\end{equation*}
as required. \qed
\end{proof}

\begin{lem}\label{seq}
Suppose $\{a_j\},\{b_j\},\{c_j\}$ and $\{d_j\}$ are real sequences with $b_j,d_j> 0$ and
\[
\lim_{j\to \infty}\dfrac{a_j}{b_j}=1\quad \text{and} \quad \lim_{j\to \infty}\dfrac{c_j}{d_j}=1.
\]
Then we have 
\[
\lim_{j\to \infty}\dfrac{a_j+c_j}{b_j+d_j}=1.
\]
\end{lem}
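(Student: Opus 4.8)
The plan is to exploit the elementary fact that $(a_j+c_j)/(b_j+d_j)$ is a \emph{mediant} of the two ratios $a_j/b_j$ and $c_j/d_j$, hence a convex combination of them. Set $\la_j = b_j/(b_j+d_j)$; since $b_j,d_j>0$, the denominator $b_j+d_j$ is positive (so all the divisions below are legitimate) and $\la_j \in (0,1)$. The first step is to record the identity
\[
\frac{a_j+c_j}{b_j+d_j} = \la_j\,\frac{a_j}{b_j} + (1-\la_j)\,\frac{c_j}{d_j},
\]
which is immediate from $\la_j\cdot(a_j/b_j) = a_j/(b_j+d_j)$ and $(1-\la_j)\cdot(c_j/d_j) = c_j/(b_j+d_j)$.

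Next, subtracting $1 = \la_j + (1-\la_j)$ from both sides gives
\[
\frac{a_j+c_j}{b_j+d_j} - 1 = \la_j\left(\frac{a_j}{b_j}-1\right) + (1-\la_j)\left(\frac{c_j}{d_j}-1\right),
\]
and then one estimates in absolute value, using $0<\la_j<1$, by
\[
\left|\frac{a_j+c_j}{b_j+d_j} - 1\right| \le \left|\frac{a_j}{b_j}-1\right| + \left|\frac{c_j}{d_j}-1\right|.
\]
By hypothesis both terms on the right tend to $0$ as $j\to\infty$, so the left-hand side tends to $0$, which is exactly the assertion.

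There is no real obstacle here; the only point requiring the stated hypothesis $b_j,d_j>0$ is the positivity of the denominators $b_j+d_j$ and the fact that the weights $\la_j$ lie in $(0,1)$. If one prefers to avoid introducing $\la_j$ altogether, the same estimate follows by writing $(a_j+c_j)/(b_j+d_j) - 1 = (a_j-b_j)/(b_j+d_j) + (c_j-d_j)/(b_j+d_j)$ and bounding $|a_j-b_j|/(b_j+d_j) \le |a_j-b_j|/b_j = |a_j/b_j - 1|$ and likewise $|c_j-d_j|/(b_j+d_j) \le |c_j/d_j - 1|$ via $b_j+d_j \ge b_j$ and $b_j+d_j \ge d_j$; either route is a two-line computation.
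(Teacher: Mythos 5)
Your proof is correct and is essentially the paper's argument: the convex-combination (mediant) identity with weights $\la_j = b_j/(b_j+d_j)$ is just a repackaging of the estimate $|a_j-b_j|/(b_j+d_j) \le |a_j/b_j-1|$ and $|c_j-d_j|/(b_j+d_j) \le |c_j/d_j-1|$, which is exactly how the paper proceeds, and your closing alternative is verbatim that route. Nothing is missing.
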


\begin{proof}
To verify this claim let $\epsilon>0$. Then there exists $N\in \mbb{N}$ such that
\[
\bigg|\dfrac{a_j}{b_j}-1\bigg|<\dfrac{\epsilon}{2}\quad \text{and}\quad \bigg|\dfrac{c_j}{d_j}-1\bigg|<\dfrac{\epsilon}{2}\quad \text{for}\,\,j\geq N.
\]
This, in particular implies that
\[
\dfrac{|a_j-b_j|}{b_j+d_j}<\dfrac{\epsilon}{2}\quad \text{and}\quad \dfrac{|c_j-d_j|}{b_j+d_j}<\dfrac{\epsilon}{2}\quad \text{for}\,\,j\geq N.
\]
Hence we have
\begin{align*}
\bigg|\dfrac{a_j+c_j}{b_j+d_j}-1\bigg|\leq \dfrac{|a_j-b_j|}{b_j+d_j}+\dfrac{|c_j-d_j|}{b_j+d_j}<\epsilon\quad \text{for}\,\,j\geq N,
\end{align*}
proving the lemma. \qed
\end{proof}

We are now in a state to give a proof of Theorem~\ref{loc}.

\begin{proofof}{Theorem~\ref{loc}~(III)}
By Proposition \ref{Sec curv n=1}, let us write
\[
R_{\ti B, D}(z)=2-\ti E_{D}(z)-\ti F_{D}(z),
\]
where 
\[
\ti E_{D}(z)=\dfrac{I'_{D}(z)}{K_{D}(z)}\quad \text{and}\quad \ti F_{D}(z)=\dfrac{I''_{D}(z)}{I'_{D}(z)}.
\]
From Proposition~\ref{loc-I} and \eqref{K-loc-KY}, we have
\[
\lim\limits_{z\to p^0}\frac{\ti E_{D}(z)}{\ti E_{U \cap D}(z)}=1\quad \text{and}\quad \lim\limits_{z\to p^0}\frac{\ti F_{D}(z)}{\ti F_{U \cap D}(z)}=1,
\]
for a small enough neighborhood $U$ of $p^0$. Now (III) follows immediately from Lemma~\ref{seq}. \qed
\end{proofof}

\section{Boundary behavior}
The proof of Theorem~\ref{bdy-n} is based on Pinchuk's scaling method and we begin by recalling the change of coordinates associated to this method. We will use the standard notation $z=('z,z_n)$, where $'z=(z_1,\ldots,z_{n-1})$. Throughout this section $D$ is a $C^2$-smoothly bounded strongly pseudoconvex domain in $\mbb C^n$ and $\rho$ is a $C^2$-smooth local defining function for $D$ defined on a neighborhood $U$ of a point $p^0\in \pa D$. Without loss of generality, we assume that
\begin{equation}\label{normal-Re-z_n}
\nabla_{\ov z} \rho(p^0)=({'0},1) \text{ and } \frac{\pa \rho}{\pa z_n}(z)\neq 0 \text{ for all } z\in U.
\end{equation}
Here, $\nabla_z \rho=(\pa \rho/\pa z_1, \ldots, \pa \rho/\pa z_n)$ and we write $\nabla_{\ov z}\rho=\ov{\nabla_z \rho}$. Note that the gradient $\nabla \rho=2\nabla_{\ov z}\rho$.

\subsection{Change of coordinates}

The following lemma from \cite{Pin} illustrates the change of coordinates near strongly pseudoconvex boundary points.

\begin{lem}\label{C0}
There exist a family of biholomorphic mappings $h^{\zeta}:\mbb{C}^n\to \mbb{C}^n$ depending continuously on $\zeta \in \pa D \cap U$, satisfying the following conditions:
\begin{itemize}
\item[\em{(a)}] $h^{p^0}=\mathbb{I}$.
\item[\em{(b)}] $h^{\zeta}(\zeta)=0$.
\item[\em{(c)}] The local defining function $\rho^{\zeta}=\rho\,\circ (h^{\zeta})^{-1}$ of the domain $D^{\zeta}=h^{\zeta}(D)$ near the origin has the form 
\[
\rho^{\zeta}(z)=2\text{Re}\big(z_n+K^{\zeta}(z)\big)+H^{\zeta}(z)+o(|z^2|)
\]
in a neighborhood of the origin, where
\[
K^{\zeta}(z)=\sum\limits_{\mu,\nu=1}^n a_{\mu \nu}(\zeta)z_{\mu}z_{\nu}\quad \text{and} \quad H^{\zeta}(z)=\sum\limits_{\mu,\nu=1}^n a_{\mu \ov{\nu}}(\zeta)z_{\mu}\ov{z}_{\nu}
\]
with $K^{\zeta}('z,0)\equiv 0$ and $H^{\zeta}('z,0)\equiv |'z|^2$.
\item[\em{(d)}] The biholomorphism $h^{\zeta}$ takes the real normal $\eta_{\zeta}=\{z=\z+2t \nabla_{\ov z}\rho(\z): t \in \mathbb{R}\}$ to $\pa D$ at $\z$ into the real normal $\{'z=y_n=0\}$ to $\pa D^{\zeta}$ at the origin.
\end{itemize}
\end{lem}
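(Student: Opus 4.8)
The plan is to construct the family $h^\zeta$ explicitly as a composition of three elementary maps, each depending continuously (indeed real-analytically) on the $1$-jet of $\rho$ at $\zeta$, and then read off properties (a)--(d) directly from the construction. First I would normalize the linear part: since $\nabla_{\ov z}\rho(\zeta)\neq 0$ for $\zeta\in U$ (shrinking $U$ and using \eqref{normal-Re-z_n}), pick a unitary rotation $U_\zeta$ of $\mbb C^n$, depending continuously on $\zeta$ with $U_{p^0}=\mbb I$, sending $\nabla_{\ov z}\rho(\zeta)$ to a positive multiple of $(\,{'0},1)$; follow it by the translation $z\mapsto z-\zeta$ composed with $U_\zeta$ so that $\zeta\mapsto 0$. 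After this affine change the defining function has the form $\rho(z)=2\Re(b z_n)+(\text{quadratic and higher})$ with $b>0$, and a final real scaling in all variables makes the coefficient of $\Re z_n$ equal to $1$; this gives (a) and (b).

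The second step is to absorb the holomorphic part of the second-order Taylor polynomial into $z_n$. Write the order-$2$ Taylor expansion at $0$ as $\rho(z)=2\Re\big(z_n+\sum a_{\mu\nu}z_\mu z_\nu + (\text{pure }z\text{-linear terms already killed})\big)+\sum a_{\mu\ov\nu}z_\mu\ov z_\nu+o(|z|^2)$, where the Hermitian form $(a_{\mu\ov\nu})$ may be taken Hermitian by symmetrizing the mixed terms. Then the holomorphic change of coordinates $w_n=z_n+\sum_{\mu,\nu}a_{\mu\nu}z_\mu z_\nu$, $\;{'w}={'z}$, is a biholomorphism of a neighborhood of $0$ fixing $0$; I would note it extends to a polynomial automorphism of $\mbb C^n$, and that its coefficients depend continuously on $\zeta$ since the $a_{\mu\nu}$ do. In these coordinates $\rho$ becomes $2\Re(w_n)+H^\zeta(w)+o(|w|^2)$ with $H^\zeta$ the Hermitian form $\sum a_{\mu\ov\nu}w_\mu\ov w_\nu$ — note the quadratic holomorphic term $K^\zeta$ in the statement is there precisely to record that we have \emph{not} yet removed it in the model, or equivalently one keeps it as the remaining freedom; I would match the normalization in \cite{Pin} so that the stated form with $K^\zeta('z,0)\equiv 0$ holds, which is arranged by a further linear shear in the ${'z}$ variables killing the terms $z_\mu z_n$ and $z_n^2$ in $K^\zeta$.

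The third and most delicate step is the normalization $H^\zeta({'z},0)\equiv|{'z}|^2$ on the complex tangent space. Strong pseudoconvexity of $D$ means the Levi form of $\rho$ at $\zeta$, restricted to the complex tangent space $\{w_n=0\}$, is positive definite; this restricted form is exactly $H^\zeta({'w},0)$ after the previous reductions. Hence there is a $\mathrm{GL}(n-1,\mbb C)$ change of the ${'w}$ variables diagonalizing it to the identity, and one checks this linear map can be chosen continuously in $\zeta$ (e.g. via the positive square root of the Hermitian matrix, which is continuous on positive-definite matrices) with the value $\mbb I$ at $p^0$. Composing all the maps gives $h^\zeta$; continuity in $\zeta$ is inherited termwise, $h^{p^0}=\mbb I$ because each constituent was the identity at $p^0$, and (d) follows because each step was arranged to send the real normal direction at $\zeta$ (spanned by $\nabla_{\ov z}\rho(\zeta)$) to the $\Re z_n$-axis and to fix the condition ${'z}=y_n=0$ along the way. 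The main obstacle is bookkeeping: verifying that \emph{simultaneously} one can achieve $K^\zeta('z,0)\equiv0$ and $H^\zeta('z,0)\equiv|'z|^2$ while keeping every map continuous in $\zeta$ and equal to the identity at $p^0$ — this is where one must be careful that the successive linear shears do not destroy earlier normalizations, and it is cleanest to cite \cite{Pin} for the precise order of operations rather than redo it, since the result is exactly Lemma~2.1 of that paper.
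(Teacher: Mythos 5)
Your three-step plan (affine normalization, quadratic shear, linear normalization of the Levi form on the complex tangent space, then cite Pinchuk) is the same route the paper follows, and your first and third steps are essentially fine --- in fact the positive square root of the Hermitian matrix is a clean way to get the tangential linear map continuously in $\zeta$, as an alternative to the paper's $L^{\zeta}\circ U^{\zeta}$. The problem is the middle step. The shear you write down, $w_n=z_n+\sum_{\mu,\nu}a_{\mu\nu}z_\mu z_\nu$ with the sum over \emph{all} indices, does not extend to a polynomial automorphism of $\mbb C^n$ once terms involving $z_n$ are present: already $z_n\mapsto z_n+z_n^2$ fails to be injective on $\mbb C$. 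Since the lemma asserts that $h^{\zeta}$ is a biholomorphism of $\mbb C^n$ (and the scaling argument later uses these global polynomial automorphisms), this version of the shear is not admissible. It would also destroy property (d) in general: the real normal, which after your affine step is the $\Re z_n$-axis, would be mapped to $\{({}'0,\,t+a_{nn}(\zeta)t^2):t\in\mbb R\}$, which is not contained in $\{'z=y_n=0\}$ unless $a_{nn}(\zeta)$ happens to be real.

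Your proposed repair --- ``a further linear shear in the $'z$ variables killing the terms $z_\mu z_n$ and $z_n^2$ in $K^{\zeta}$'' --- cannot work and is not needed. Any linear substitution that keeps $w_n=z_n$ (as it must, to preserve the normalization $2\Re z_n$ of the linear part) acts on the tangential variables by $z_\mu=w_\mu-c_\mu w_n$, and under this the coefficients of $w_\mu w_n$ in the quadratic holomorphic part are unchanged, while the Hermitian part $H^{\zeta}$ acquires new mixed terms; so the $z_\mu z_n$ terms cannot be removed this way. More importantly, the lemma never claims they are removed: it only asserts $K^{\zeta}('z,0)\equiv 0$, i.e.\ $K^{\zeta}$ consists exactly of the quadratic holomorphic terms divisible by $z_n$, which are retained. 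The correct shear is the one in \eqref{phi_2-defn}, $w=\bigl('z,\;z_n+\sum_{\mu,\nu=1}^{n-1}a_{\mu\nu}(\zeta)z_\mu z_\nu\bigr)$: adding a function of $'z$ alone to $z_n$ makes it a genuine polynomial automorphism of $\mbb C^n$, it fixes the $z_n$-axis pointwise (so (b) and (d) survive intact), and it kills precisely the tangential--tangential quadratic terms, which is all that is required. With that correction, and with your (acceptable, though different from the paper's explicit affine map \eqref{phi_1-defn}) unitary-plus-scaling first step, the argument goes through as in Pinchuk's construction.
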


The definition of the map $h^{\z}$ and its derivative will play an important role in the computation of the boundary asymptotics and so we quickly recall its construction. We fix $\z \in \pa D \cap U$. The map $h^{\zeta}$ is a polynomial automorphism of $\mathbb{C}^n$ defined as the composition $h^{\zeta}(z)=\phi_3^{\zeta}\circ \phi_2^{\zeta}\circ \phi_1^{\zeta}(z)$, where the maps $\phi_i^{\zeta}:\mathbb{C}^n \to \mathbb{C}^n$ are biholomorphisms defined as follows: The map $w=\phi_1^{\z}(z)$ is an affine transformation given by
\begin{equation}\label{phi_1-defn}
\begin{aligned}
w_j & =\frac{\pa \rho}{\pa \ov z_n}(\zeta)(z_j-\zeta_j)-\frac{\pa \rho}{\pa \ov z_j}(\zeta)(z_n-\zeta_n)\quad \text{for}\quad j=1,\ldots,n-1,\\
w_n & =\sum\limits_{\nu=1}^n \frac{\pa \rho}{\pa z_{\nu}}(\zeta)(z_{\nu}-\zeta_{\nu}).
\end{aligned}
\end{equation}
The map $\phi_1^{\zeta}$ is nonsingular by \eqref{normal-Re-z_n} and it takes the point $\zeta$ to the origin. We relabel the new coordinates $w$ as $z$. Then the Taylor series expansion of the local defining function $\rho \circ (\phi_1^{\z})^{-1}$ for the domain $\phi_1^{\z}(D)$ near the origin has the form
\begin{equation}\label{T-exp}
2\text{Re}\left(z_n+\sum\limits_{\mu,\nu=1}^n a_{\mu\nu}(\zeta)z_{\mu}z_{\nu}\right)+H^{\z}(z)+o(\vert z \vert^2),
\end{equation}
where $H^{\z}(z)$ is a Hermitian form.

The map $w=\phi_2^{\zeta}(z)$ is given by 
\begin{equation}\label{phi_2-defn}
w=\left('z, z_n+\sum\limits_{\mu,\nu=1}^{n-1}a_{\mu\nu}(\zeta)z_{\mu}z_{\nu}\right)
\end{equation}
and is a polynomial automorphism. Relabelling the new coordinates $w$ as $z$, the Taylor series expansion of the local defining function $ \rho \circ (\phi^{\z}_1)^{-1} \circ (\phi^{\z}_2)^{-1}$ for the domain $\phi_2^{\z} \circ \phi_1^{\z}(D)$ has the form \eqref{T-exp} with $
a_{\mu\nu}=0$ for $1\leq \mu, \nu \leq n-1$.

Finally, the map $\phi^{\z}_3$ is chosen so that the Hermitian form $H^{\z}(z)$ satisfies $H^{\z}('z,0)=\vert 'z\vert^2$. Since $D$ is strongly pseudoconvex and the complex tangent space to $\pa D$ at $\z$ is given by $z_n=0$ in the current coordinates, the form $H^{\zeta}('z,0)$ is strictly positive definite. Hence there exists a unitary map $U^{\zeta}:\mbb C^{n-1}\rightarrow \mbb C^{n-1}$ such that $H^{\zeta}(U^{\zeta}('z),0)$ is diagonal with diagonal entries $\la_1^{\zeta},\ldots,\la_{n-1}^{\zeta}>0$. Now consider the stretching map $L^{\zeta}=\text{diag}\{(\la_1^{\zeta})^{-1/2},\ldots,(\la_{n-1}^{\zeta})^{-1/2}\}$. Then the linear map $A^{\zeta}:\mbb C^{n-1}\to \mbb C^{n-1}$ given by $A^{\zeta}:=L^{\zeta}\circ U^{\zeta}$ satisfies $H^{\zeta}(A^{\zeta}('z),0)=|'z|^2$. Note that $U^{\z}$ and $L^{\z}$, and hence $A^{\z}$ can be chosen to depend continuously on $\z$. Thus, if we define $w=\phi^{\z}_3(z)$ by
\[
w=(A^{\zeta}('z),z_n),
\]
and relabel $w$ as $z$, then the local defining function $\rho^{\z}=\rho\circ (h^{\z})^{-1}$ for the domain $D^{\z}=h^{\z}(D)$ near the origin has the Taylor series expansion as in (c). Also, it is evident from the construction of the maps $h^{\z}$ that they satisfy (a), (b), (d), and that they depend continuously on $\z$.

\subsection{Scaling of $D$}\label{Dialation}
By strong pseudoconvexity, shrinking $U$ if necessary, there exist local holomorphic coordinates $z_1,\ldots, z_n$ on $U$ in which $p^0=0$, and
\begin{equation}\label{normal form}
\rho(z)=2\Re z_n+|'z|^2+o\big(\Im z_n,|'z|^2\big), \quad z \in U,
\end{equation}
and a constant $0<r<1$ such that
\begin{align}\label{subset1}
U \cap D\subset \Om:=\big\{z \in \mbb C^n: 2\Re z_n+r|'z|^2<0\big\}.
\end{align}
Henceforth, we will be working in the above coordinates, and with $U,\rho$ and $p^0$ as above.

Let us consider a sequence of points $p^j$ in $D$ that converges to $p^0=0$ on $\pa D$. For $j$ sufficiently large, and without loss of generality we assume that for all $j$, $p^j \in U$ and there exists a unique $\z^j\in \pa D \cap U$ that is closest to $p^j$. Define $\de_j:= \text{d}(p^j,\pa D)=|p^j-\z^j|$. Note that $\z^j\to p^0=0$ and $\de_j\to 0$ as $j\to \infty$. For each $\z^j$, denote by $h_j$ the map $h^{\z^j}$ given by Lemma \ref{C0}. Denoting $\phi_i^{\z^j}$ by $\phi_i^j$ for $j=1,2,3$, we have $h_j=\phi_1^j\circ \phi_2^j\circ \phi_3^j$. Also set $\rho_j=\rho^{\z^j}$. Then by Lemma \ref{C0}, near the origin,
\begin{align*}
\rho_j(z)= 2\text{Re}\,\big(z_n+K_j(z)\big)+H_j(z)+o\big(|z|^2\big),
\end{align*}
where $K_j=K^{\z^j}$ and $H_j=H^{\z^j}$. Moreover, thanks to the strong pseudoconvexity of 
$\pa D$ near $p^0=0$, shrinking $U$ if necessary and taking a smaller $r$ in \eqref{subset1}, we have
\begin{align}\label{subset2}
h_j(U\cap D) \subset \Om
\end{align}
for all large $j$. Note that by Theorem~\ref{loc}, it is enough to prove Theorem~\ref{bdy-n} for the domain $U\cap D$, shrinking $U$ if necessary. Set $D_j=h_j(U\cap D)$, $q^j=h_j(p^j)$, and $\eta_j=\text{d}(q^j,\pa D_j)$.

Now consider the anisotropic dilation map $\La_j : \mathbb{C}^n \to \mathbb{C}^n$ defined by
\begin{equation}\label{defn-Lambda_j}
\La_j(z)=\left(\frac{z_1}{\sqrt{\eta_j}},\ldots,\frac{z_{n-1}}{\sqrt{\eta_j}},\frac{z_n}{\eta_j}\right).
\end{equation}
Set $\ti D_j=\La_j(D_j)=\La_j\circ h_j(U\cap D)$. We will call the maps $S_j:=\La_j\circ h_j$ the scaling maps and the domains $\widetilde{D}_j=S_j(U\cap D)$ the scaled domains. Note that since $S_j(p^j)=\La_j\circ h_j(p^j)=('0,-1)$, each $\ti D_j$ contains the point $('0,-1)$ and we will denote this point by $b^*$.  A defining function for $\ti D_j$ near the origin, is given by
\[
\ti \rho_j(z)=\frac{1}{\eta_j} \rho_j\big(\La_j^{-1}(z)\big)=2\text{Re}\Big(z_n+\frac{1}{\eta_j}K_j\big(\La_j^{-1}(z)\big)\Big)+\frac{1}{\eta_j}H_j\big(\La_j^{-1}(z)\big)+o\big(\eta_j^{1/2}|z|^2\big).
\]
Since $K_j(z)$ and $H_j(z)$ satisfy condition (c) of Lemma~\ref{C0}, it follows that
\[
\lim_{j\to \infty}\frac{1}{\eta_j}K_j(\La_j^{-1}z)=0\quad \text{and}\quad \lim_{j\to \infty}\frac{1}{\eta_j}H_j(\La_j^{-1}z)=|'z|^2
\]
in $C^2$-topology on compact subsets of $\mathbb{C}^n$. Evidently, $o(\eta_j^{1/2}|z|^2)\to 0$ as $j \to \infty$ in $C^2$-topology on any compact set of $\mbb C^n$. Thus, the defining functions $\ti\rho_j$ converge in $C^2$ topology on compact subsets of $\mbb C^n$ to 
\[
\rho_{\infty}(z)=2\text{Re}\,z_n+|'z|^2.
\]
Hence our scaled domains $\ti D_j$ converge in the local Hausdorff sense to the Siegel upper half-space
\[
D_{\infty}=\big\{z\in \mbb C^n:2\text{Re}\,z_n+|'z|^2<0\big\}.
\]

\subsection{Stability of the Kobayashi--Fuks metric}

\begin{prop}\label{stability-Fuks}
For $z \in D_{\infty}$ and $u\in \mathbb{C}^n \setminus \{0\}$, we have
\[
g_{\ti B, \ti D_j}(z)\to g_{\ti B, D_{\infty}}(z),  \quad \tau_{\ti B, \ti D_j}(z,u)\to \tau_{\ti B, D_{\infty}}(z,u) \quad \text{and}\quad \Ric_{\ti B, \ti D_j} (z,u) \to \Ric_{\ti B, D_{\infty}}(z,u)
\]
as $j \to \infty$. Moreover, the first convergence is uniform on compact subsets of \,$D_{\infty}$ and the second and third convergences are uniform on compact subsets of\, $D_{\infty}\times \mbb C^n$.
\end{prop}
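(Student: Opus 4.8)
The plan is to reduce the whole statement to the $C^\infty$-stability of the Bergman kernel under the scaling, and then to propagate this stability through the algebraic definition of the Kobayashi--Fuks objects. Observe that $g^{\ti B,D}_{\al\ov\be}$, $g_{\ti B,D}$, $\tau_{\ti B,D}$ and $\Ric_{\ti B,D}$ are produced from the Bergman kernel by a fixed finite chain of operations: complex Hessian, determinant, matrix inverse, logarithm and quotient. Indeed $\Ric^{B,D}_{\al\ov\be}=-\pa_{z_\al}\pa_{\ov z_\be}\log\det\big(\pa_{z_\mu}\pa_{\ov z_\nu}\log K_D\big)$, so that $g^{\ti B,D}_{\al\ov\be}=(n+1)\pa_{z_\al}\pa_{\ov z_\be}\log K_D-\Ric^{B,D}_{\al\ov\be}$, $g_{\ti B,D}=\det\big(g^{\ti B,D}_{\al\ov\be}\big)$ and $\Ric^{\ti B,D}_{\al\ov\be}=-\pa_{z_\al}\pa_{\ov z_\be}\log g_{\ti B,D}$ are universal expressions in the derivatives $\big(\pa_z^A\pa_{\ov w}^B K_D\big)(z,\ov z)$ with $|A|,|B|\le 3$ and in $1/K_D(z)$. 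Hence it is enough to show that for every compact $L\subset D_\infty$ and all multi-indices $A,B$ with $|A|,|B|\le 3$,
\[
\big(\pa_z^A\pa_{\ov w}^B K_{\ti D_j}\big)(z,\ov w)\longrightarrow \big(\pa_z^A\pa_{\ov w}^B K_{D_\infty}\big)(z,\ov w)\qquad\text{uniformly on }L\times L .
\]

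Granting this, $K_{D_\infty}>0$ forces $\log K_{\ti D_j}\to\log K_{D_\infty}$ in $C^\infty$ on compact subsets of $D_\infty$, hence $G_{B,\ti D_j}\to G_{B,D_\infty}$ there; since the limiting Hermitian matrices $G_{B,D_\infty}$ and $G_{\ti B,D_\infty}$ are positive definite (via \eqref{tr-kf} and the biholomorphism of $D_\infty$ onto $\mbb B^n$), determinants, inverses, logarithms and Hessians pass to the limit termwise, so $g^{\ti B,\ti D_j}_{\al\ov\be}\to g^{\ti B,D_\infty}_{\al\ov\be}$, $g_{\ti B,\ti D_j}\to g_{\ti B,D_\infty}$ and $\Ric^{\ti B,\ti D_j}_{\al\ov\be}\to\Ric^{\ti B,D_\infty}_{\al\ov\be}$, all in $C^\infty$ on compacts of $D_\infty$. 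The three convergences now follow: $\tau^2_{\ti B,\ti D_j}(z,u)=\sum_{\al,\be}g^{\ti B,\ti D_j}_{\al\ov\be}(z)u^\al\ov u^\be$ is a quadratic form in $u$ with uniformly convergent coefficients, so $\tau_{\ti B,\ti D_j}$ converges uniformly on compact subsets of $D_\infty\times\mbb C^n$; and $\Ric_{\ti B,\ti D_j}(z,u)$ is the quotient of $\sum_{\al,\be}\Ric^{\ti B,\ti D_j}_{\al\ov\be}(z)u^\al\ov u^\be$ by $\tau^2_{\ti B,\ti D_j}(z,u)$, whose denominator is bounded below by a positive constant on any compact subset of $D_\infty\times(\mbb C^n\setminus\{0\})$ once $j$ is large.

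It remains to establish the kernel stability, and I expect this to be the main obstacle. From \S\ref{Dialation} we may use that $\ti D_j\to D_\infty$ in the local Hausdorff sense — so every compact $L\subset D_\infty$ lies in $\ti D_j$ for $j$ large — that $\ti D_j\subset\Om$ for a fixed pseudoconvex domain $\Om$ biholomorphic to $\mbb B^n$, and that each $\ti D_j$ is bounded and pseudoconvex (a biholomorphic image of $U\cap D$). I would first prove the pointwise diagonal convergence $K_{\ti D_j}(z)\to K_{D_\infty}(z)$ for $z\in D_\infty$. The upper bound is soft: fixing an exhaustion $D'_1\subset\subset D'_2\subset\subset\cdots$ of $D_\infty$ by relatively compact subdomains, monotonicity of the Bergman kernel gives $K_{\ti D_j}(z)\le K_{D'_m}(z)$ once $D'_m\subset\ti D_j$, and the Ramadanov-type theorem ($K_{D'_m}\to K_{D_\infty}$ locally uniformly) yields $\limsup_j K_{\ti D_j}(z)\le K_{D_\infty}(z)$ together with a bound for $K_{\ti D_j}$ on $L$ uniform in $j$. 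The lower bound is the real point: take an extremal $f\in A^2(D_\infty)$ with $\|f\|_{D_\infty}=1$ and $|f(z)|^2=K_{D_\infty}(z)$, multiply by a cut-off $\chi$ that is $1$ on $D'_m$ and supported in $D'_{m+1}\subset\subset D_\infty$, and solve $\bar\pa u_j=f\,\bar\pa\chi$ on $\ti D_j$ by H\"ormander's $L^2$-estimate \cite{Hor} with a bounded strictly plurisubharmonic weight inherited from $\Om\cong\mbb B^n$; because $\supp(f\bar\pa\chi)$ is a fixed compact subset of $\Om$, the resulting constant is uniform in $j$ and $\|u_j\|_{L^2}\le C\|f\|_{L^2(D_\infty\setminus D'_m)}$. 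As $\bar\pa\chi$ vanishes near $z$, $u_j$ is holomorphic there, so the mean-value inequality bounds $|u_j(z)|$ by $\|u_j\|_{L^2}$; hence $f_j=\chi f-u_j\in A^2(\ti D_j)$ satisfies $|f_j(z)|\ge|f(z)|-\varepsilon_m$ and $\|f_j\|_{\ti D_j}\le 1+\varepsilon_m$ for $j$ large, with $\varepsilon_m\to0$, giving $\liminf_j K_{\ti D_j}(z)\ge K_{D_\infty}(z)$ upon letting $j\to\infty$ and then $m\to\infty$. Finally, $|K_{\ti D_j}(z,\ov w)|^2\le K_{\ti D_j}(z)K_{\ti D_j}(w)$ and the uniform diagonal bound make the kernels a locally bounded family on $D_\infty\times D_\infty$; by Montel's theorem any subsequential limit is holomorphic in $z$, anti-holomorphic in $w$ and agrees with $K_{D_\infty}$ on the diagonal, hence equals $K_{D_\infty}$ identically, so $K_{\ti D_j}\to K_{D_\infty}$ locally uniformly and Cauchy estimates upgrade this to the required convergence of all partial derivatives. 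The genuinely delicate points — keeping the $\bar\pa$-constant uniform while the $\ti D_j$ grow, and controlling the tail of the extremal function near the unbounded boundary at infinity — are exactly what the fixed domain $\Om$ and the exhaustion take care of; everything downstream of the kernel stability is routine.
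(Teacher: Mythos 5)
Your proposal is correct and follows essentially the same route as the paper: the paper's proof is exactly the observation that all the Kobayashi--Fuks quantities are built from the potential $\log\big(K^{n+1}_{D}g_{B,D}\big)$, so everything reduces to $K_{\ti D_j}\to K_{D_\infty}$ together with all derivatives, uniformly on compact subsets of $D_\infty$. The only difference is that the paper delegates this kernel stability to a Ramadanov-type result (\cite{BBMV1}, Lemma~2.1, with details as in Lemma~5.3 of \cite{BV-ns}), whereas you prove it directly; your H\"ormander-with-cutoff argument (using that $\ti D_j\subset \Om$ with $\Om$ invariant under the dilations and biholomorphic to the ball, plus Montel and polarization to pass from the diagonal to all derivatives) is precisely the content of those cited lemmas and is sound.
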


\begin{proof}
Since the Kobayashi--Fuks metric on the domain $D$ has K\"ahler potential $\log(K^{n+1}_{D}g_{B,D})$, i.e.,
\[
g^{\ti B,D}_{\al \ov \be} = \frac{\pa^2 \log(K^{n+1}_{D}g_{B,D})}{\pa z_{\al}\pa \ov z_{\be}},
\]
all that is required is to show that
\[
K^{n+1}_{\ti D_j}g_{B,\ti D_j} \to K^{n+1}_{D_{\infty}}g_{B,D_\infty}
\]
uniformly on compact subsets of $D_{\infty}$, together with all derivatives. But this is an immediate consequence of the fact that $K_{\ti D_j} \to K_{D_{\infty}}$ together will all derivatives on compact subsets of $D_{\infty}$. This fact can be established from a Ramadanov type result \cite[Lemma~2.1]{BBMV1}, and for the details we refer the reader to Lemma~5.3 of \cite{BV-ns} with the note that by taking $d=0$ there, $K_{\ti D_j}=K_{\ti D_j,0}$ and $K_{D_{\infty}}=K_{D_{\infty},0}$. \qed
\end{proof}

\subsection{Boundary asymptotics}
Recall that $S_j=\La_j \circ h_j$, $S_j(U\cap D)=\ti D_j$, and $S_j(p^j)=b^{*}=('0,-1)$. Denoting the matrix of a linear map by itself, we have
\begin{equation}\label{der-S_j}
S_j'(p^j)=\La_j h_j'(p^j)=
\La_j \cdot \phi_3^j \cdot (\phi_2^j)'\big(\phi_1^j(p^j)\big) \cdot (\phi_1^j)'(p^j).
\end{equation}
Note that from the definition of $\phi_1^j$,
\begin{equation}\label{phi_1-matrix}
(\phi_{1}^j)'(p^j)= 
\begin{pmatrix}
\dfrac{\pa \rho}{\pa \ov z_n}(\z^j) & 0 & \cdots & 0 & -\dfrac{\pa\rho}{\pa \ov z_1}(\z^j)\\
0 & \dfrac{\pa \rho}{\pa \ov z_n}(\z^j) & \cdots & 0 & -\dfrac{\pa\rho}{\pa \ov z_2}(\z^j)\\
\vdots & \vdots & \ddots & \vdots & \vdots\\
0 & 0 & \cdots & \dfrac{\pa \rho}{\pa \ov z_n}(\z^j) & -\dfrac{\pa\rho}{\pa \ov z_{n-1}}(\z^j)\\
\dfrac{\pa\rho}{\pa z_1}(\z^j) & \dfrac{\pa\rho}{\pa z_2}(\z^j) & \cdots & \dfrac{\pa\rho}{\pa z_{n-1}}(\z^j) & \dfrac{\pa\rho}{\pa z_n}(\z^j)
\end{pmatrix}.
\end{equation}
Also, since 
\begin{equation}\label{p^j-form}
p^j=\z^j-\de_j \frac{\nabla_{\ov z} \rho(\z^j)}{\big \vert \nabla_{\ov z} \rho(\z^j) \big \vert},
\end{equation}
we have
\begin{equation}\label{phi_1(p^j)-form}
\phi_1^j(p^j)=\Big({}'0,-\de_j\big|\nabla_{\ov z}\rho(\z^j)\big|\Big).
\end{equation}
Therefore, from the definition of $\phi_2^j$, we have
\begin{align*}
(\phi_2^j)'\big(\phi_1^j(p^j)\big)=\mathbb{I}_n.
\end{align*}
Finally, recall that $\phi^j_3(z)=(A^j('z),z_n)$ where $A^j:=A^{\z^j}:\mathbb{C}^{n-1} \to \mathbb{C}^{n-1}$ are linear maps satisfying $A^j \to \mathbb{I}_{n-1}$. Therefore,
\[
\phi^j_3=\begin{bmatrix} A^j_{p,q} & 0\\ 0 &1\end{bmatrix},
\]
where $A^j_{p,q}$ are the entries of the matrix of $A^j$. Thus,
\begin{equation}\label{lim-Dh_j}
h_j'(p^j) = \begin{pmatrix}
A^j_{1,1}\dfrac{\pa\rho}{\pa \ov z_n}(\z^j)  & \cdots & A^j_{1, n-1}\dfrac{\pa\rho}{\pa \ov z_n}(\z^j) & -\sum\limits_{\nu=1}^{n-1}
A^j_{1, \nu}\dfrac{\pa\rho}{\pa \ov z_\nu}(\z^j)\\
\vdots & \cdots & \vdots & \vdots \\
A^j_{n-1, 1}\dfrac{\pa\rho}{\pa \ov z_n}(\z^j) & \cdots & A^j_{n-1, n-1}\dfrac{\pa\rho}{\pa \ov z_n}(\z^j) & -\sum\limits_{\nu=1}^{n-1}A^j_{n-1, \nu}\dfrac{\pa\rho}{\pa \ov z_\nu}(\z^j)\\
\dfrac{\pa\rho}{\pa z_1}(\z^j) & \cdots & \dfrac{\pa\rho}{\pa z_{n-1}}(\z^j) & \dfrac{\pa\rho}{\pa z_n}(\z^j)
\end{pmatrix} \to \mathbb{I}_{n}
\end{equation}
in the operator norm.

We also note that as $\phi_2^j$ and $\phi_3^j$ fix points on the $\Re z_n$-axis, we have from \eqref{phi_1(p^j)-form},
\[
q^j=h_j(p^j)=\Big({}'0,-\de_j\big|\nabla_{\ov z}\rho(\z^j)\big|\Big).
\]
As the normal to $\pa D_j$ at $0$ is the $\Re z_n$-axis and $\eta_j=\text{d}(q^j,\pa D_j)$, we have $\eta_j=\de_j\,|\nabla_{\ov z}\rho(\z^j)|$ and hence
\begin{equation}\label{lim-de_j/eta_j}
\lim_{j \to \infty} \frac{\eta_j}{\delta_j} =1.
\end{equation}

Now consider the Cayley transform $\Phi$ defined by
\begin{equation}\label{C-T}
\Phi(z_1,\ldots,z_n)=\left(\dfrac{\sqrt 2\,{}'z}{z_n-1},\dfrac{z_n+1}{z_n-1}\right).
\end{equation}
It can be shown that $\Phi$ maps $D_{\infty}$ biholomorphically onto $\mbb{B}^n$. We also note that $b^{*}=('0,-1) \in D_{\infty}$, $\Phi(b^{*})=0$, and
\begin{equation}\label{der-Phi}
\Phi'(b^*)=-\diag\{1/\sqrt 2,\ldots,1/\sqrt 2, 1/2\}.
\end{equation}

We now present the proof of Theorem~\ref{bdy-n}.

\begin{proofof}{Theorem~\ref{bdy-n}}
Note that by the localization result Theorem~\ref{loc}, it is enough to compute the asymptotics for the domain $U \cap D$.

(I) By invariance of the Kobayashi--Fuks metric,
\begin{align*}
\tau_{\ti B, U \cap D}(p^j,u)=\tau_{\ti B, \ti D_j}\big(b^*,S_j'(p^j)u\big).
\end{align*}
Note that
\[
S_j'(p^j)u=\La_j \big(h_j'(p^j)u\big)=\Big(\eta_j^{-1/2}\,\, {'\big(h_j'(p^j)u\big)}, \eta_j^{-1}\big(h_j'(p^j)u\big)_n\Big),
\]
and so by \eqref{lim-Dh_j},
\[
\eta_j S_j'(p^j)u \to ('0, u_n)
\]
uniformly in unit vectors $u$. Therefore, by Proposition~\ref{stability-Fuks},
\begin{align*}
\lim _{j \to\infty} \de_j\,\tau_{\ti B, U \cap D}(p^j,u)=\lim_{j \to\infty} \frac{\de_j}{\eta_j} \tau_{\ti B, \ti D_j}\big(b^*,\eta_j S_j'(p^j)u\big)=\tau_{\ti B,  D_{\infty}}\big(b^*,('0,u_n)\big)\end{align*}
uniformly in unit vectors $u$. Now, all that is required is to compute the right hand side using the Cayley transform $\Phi$ from \eqref{C-T}, its derivative from \eqref{der-Phi}, and the transformation rule. Thus,
\[
\tau_{\ti B,  D_{\infty}}\big(b^*,('0,u_n)\big)=\tau_{\ti B, \mbb B^n}\bigg(0,\left('0,-\dfrac{u_n}{2}\right)\bigg)=\dfrac{1}{2}\sqrt{(n+1)(n+2)}|u_n|,
\]
where the last equality follows from Proposition~\ref{ball}, and this proves (I).

(II) For brevity, we write $u^j=u_H(p^j)$ and $u^0 =u_H(p^0)$. By invariance of the Kobayashi--Fuks metric,
\begin{align*}
\tau_{\ti B, U \cap D}\big(p^j,u_H(p^j)\big)=\tau_{\ti B, \ti D_j}\big(b^*, S_j'(p^j)u^j\big).
\end{align*}
Note that, since $u^j\in H_{q^j}(\pa D)$, we have from \eqref{lim-Dh_j}
\[
h_j'(p^j)u^j =(v_1^j,\ldots,v_{n-1}^j,0),
\]
where 
\[
v^j_l=\sum_{\nu=1}^{n-1} A^j_{l, \nu} \left(u^j_{\nu} \frac{\pa \rho}{\pa \ov z_n}(\z^j) - u^j_n \frac{\pa \rho}{\pa \ov z_{\nu}}(\z^j)\right),
\quad l=1,\ldots,n-1.
\]
Therefore,
\begin{align*}
S_j'(p^j)u^j =\La_j\big(h_j'(p^j)u^j\big)= \left(\frac{v^j_1}{\sqrt{\eta_j}}, \ldots, \frac{v^j_{n-1}}{\sqrt{\eta_j}},0\right).
\end{align*}
Observe that $v^j_l \to u^0_l$ and the convergence is uniform on unit vectors $u$ and so
\[
\sqrt{\eta_j} S_j'(p^j)u^j \to (u^0_1, \ldots, u^0_{n-1},0)
\]
and the convergence is uniform in unit vectors $u$. Hence, by Proposition~\ref{stability-Fuks},
\[
\sqrt{\de_j} \tau_{\ti B, U \cap D}\big(p^j,u_H(p^j)\big)  =\sqrt{\frac{\de_j}{\eta_j}}  \tau_{\ti B, \ti D_j}\big(b^*, \sqrt{\eta_j} S_j'(p^j)u^j\big) \to \tau_{\ti B, D_{\infty}}\big(b^{*}, ('u^0, 0)\big)
\]
uniformly in unit vectors $u$. Again, using the Cayley transform $\Phi$ from \eqref{C-T} and its derivative from \eqref{der-Phi}, the transformation rule gives
\begin{align*}
 \tau_{\ti B, D_{\infty}}\big(b^{*}, ('u^0, 0)\big)
=\tau_{\ti B, \mbb B^n}\bigg(0,\bigg(-\dfrac{'u^0}{\sqrt 2},0\bigg)\bigg)=\sqrt{\dfrac{1}{2}(n+1)(n+2) \vert 'u^0\vert^2},
\end{align*}
by Proposition~\ref{ball}. This proves (II) once we observe from \eqref{normal form} that $\mathcal{L}_{\rho}\big(p^0,u_H(p^0)\big)=\vert 'u^0\vert^2$.

(III) By the transformation rule for the Kobayashi--Fuks metric, we have

\begin{align}\label{transformation4}
g_{\ti B, U \cap D}(p^j)=g_{\ti B, \ti D_j}(b^*) \big\vert \det S'_j(p^j)\big\vert^2.
\end{align}
Note that
\[
\det S'_j(p^j)= \det \La_j \det h_j'(p^j) = \eta_j^{-(n+1)/2}\det h_j'(p^j),
\]
and so by \eqref{lim-Dh_j},
\[
\eta_j^{n+1} \vert \det S'_j(p^j)\vert^2 \to 1.
\]
Therefore,
\[
\de_j^{n+1} g_{\ti B, U \cap D}(p^j) = \left(\frac{\de_j}{\eta_j}\right)^{n+1} g_{\ti B, \ti D_j}(b^*) \eta_j^{n+1}\big\vert \det S'_j(p^j)\big\vert^2 \to g_{\ti B, D_{\infty}}(b^*).
\]
As before, using the Cayley transform $\Phi$ from \eqref{C-T} and its derivative from \eqref{der-Phi}, we obtain from the transformation rule,
\[
g_{\ti B,  D_\infty}(b^*)=g_{\ti B, \mbb B^n}(0)\,|\text{det}\,\Phi'(b^*)|^2=\dfrac{(n+1)^n(n+2)^n}{2^{n+1}},
\]
by Proposition~\ref{ball}. This completes the proof of (III).

(IV) By invariance and Proposition~\ref{stability-Fuks},
\[
R_{\ti B, U \cap D}(p^j)= R_{ \ti B, \ti D_j} (b^*) \to R_{\ti B, D_{\infty}}(b^*)
= R_{\ti B, \De}(0).
\]
By Proposition~\ref{ball}, we have for $z \in \De$,
\[
g_{\ti B,\De}(z)=\frac{6}{(1-|z|^2)^2},
\]
and therefore,
\[
R_{\ti B, \De}(z)=-\frac{1}{g_{\ti B,\De}(z)}\frac{\pa^2 \log g_{\ti B, \De}}{\pa z \pa \ov z}(z)=-\frac{1}{3},
\]
which completes the proof of (IV) and the theorem. \qed
\end{proofof}

\begin{rem}
The proof of Theorem~\ref{bdy-n} \em{(IV)} also follows from the observation that we can choose a sufficiently small disc $U$ centered at $p^0$ such that $U \cap D$ is simply connected thanks to the smoothness of $\pa D$. Therefore, $U \cap D$ is biholomorphic to the unit disc $\De$ and hence the Kobayashi--Fuks metric of $U \cap D$ has the constant Gaussian curvature $-1/3$ which follows from Proposition~\ref{ball}. Now, applying Theorem~\ref{loc}, we immediately obtain $R_{\ti B,  D}(p^j) \to -1/3$.

\end{rem}

\section{Existence of closed geodesics with prescribed homotopy class}

The proof of Theorem~\ref{geodesic} is based on the following result:
\begin{thm}[Herbort, {\cite[Theorem1.1]{Herbort1983}}]\label{Herbort}
Let $G \subset \mathbb{R}^N$ be a bounded domain such that $\pi_1(G)$ is nontrivial and the following conditions are satisfied:
\begin{itemize}
\item[\em{(i)}] For each $p\in \ov G$ there is an open neighborhood $U\subset \mbb R^n$, such that the set $G\cap U$ is simply connected.
\item[\em{(ii)}] The domain $G$ is equipped with a complete Riemannian metric $g$ which possesses the following property:
\begin{itemize}
\item[\em{(P)}] For each $S>0$ there is a $\de>0$ such that for every point $p\in G$ with $\text{d}(p,\pa G)<\de$ and every $X\in \mbb R^n$, $g(p,X)\geq S\|X\|^2$.
\end{itemize}
\end{itemize}
Then every nontrivial homotopy class in $\pi_1(G)$ contains a closed geodesic for $g$.
\end{thm}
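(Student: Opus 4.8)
The plan is to find, in a prescribed nontrivial conjugacy class of $\pi_1(G)$ — equivalently, in the corresponding free homotopy class of loops, which I denote $\mathfrak{c}$ — a closed geodesic, by minimizing $g$-length over $\mathfrak{c}$. Put $L=\inf\{\ell_g(\gamma):\gamma\in\mathfrak{c}\}$, where $\ell_g$ is $g$-length, and choose loops $\gamma_k\in\mathfrak{c}$ with $\ell_g(\gamma_k)\to L$; discarding finitely many terms we may assume $\ell_g(\gamma_k)\le L+1$ for all $k$. The whole argument reduces to showing that this minimizing sequence stays inside a fixed compact subset of $G$, after which a standard compactness-and-regularity argument produces the geodesic.

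For the confinement, all three hypotheses are used. First, by (i) and compactness of $\partial G$, choose finitely many open sets $U_1,\dots,U_m$ with each $G\cap U_i$ simply connected and $\{z\in\bar G:\mathrm d(z,\partial G)<\delta_1\}\subset\bigcup_i U_i$ for some $\delta_1>0$, and let $\lambda>0$ be a Lebesgue number of this cover of the compact set $\{z\in\bar G:\mathrm d(z,\partial G)\le\delta_1/2\}$. Then any loop contained in the collar $\{\mathrm d(\cdot,\partial G)<\delta_1/2\}$ of Euclidean diameter $<\lambda$ lies in some $G\cap U_i$, hence is null-homotopic; since $\mathfrak{c}$ is nontrivial, every representative of $\mathfrak{c}$ lying in that collar therefore has Euclidean diameter $\ge\lambda$, hence Euclidean length $\ge 2\lambda$. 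Now invoke (P): pick $S$ with $2\lambda\sqrt S>L+1$, let $\delta_S$ be the associated radius, and set $\delta_*=\min(\delta_1/2,\delta_S)$. If some $\gamma\in\mathfrak{c}$ with $\ell_g(\gamma)\le L+1$ were contained in $\{\mathrm d(\cdot,\partial G)<\delta_*\}$, then (P) would give $\ell_g(\gamma)\ge\sqrt S\,\ell_{\mathrm{eucl}}(\gamma)\ge 2\lambda\sqrt S>L+1$, a contradiction; hence each $\gamma_k$ meets the compact set $K_1=\{z\in G:\mathrm d(z,\partial G)\ge\delta_*\}$. Fix $x_0\in K_1$. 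Since $\ell_g(\gamma_k)\le L+1$ and $\mathrm{diam}_g(K_1)<\infty$, every point of $\gamma_k$ lies within $g$-distance $R:=L+1+\mathrm{diam}_g(K_1)$ of $x_0$; and because $g$ is complete, Hopf--Rinow shows the closed $g$-ball $\overline{B_g(x_0,R)}$ is a compact subset of $G$. Calling it $K$, we have $\gamma_k\subset K$ for all $k$.

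Now parametrize each $\gamma_k$ on $[0,1]$ proportionally to $g$-arc length; the $\gamma_k$ are uniformly Lipschitz and valued in the compact set $K$, so by Arzel\`a--Ascoli a subsequence converges uniformly to a Lipschitz loop $\gamma_\infty\colon[0,1]\to K\subset G$. Lower semicontinuity of length under uniform convergence gives $\ell_g(\gamma_\infty)\le\liminf_k\ell_g(\gamma_k)=L$. Moreover $\gamma_\infty([0,1])$ is compactly contained in $G$, so for $k$ large $\gamma_k$ is joined to $\gamma_\infty$ by the homotopy running along the short minimizing geodesics between $\gamma_k(t)$ and $\gamma_\infty(t)$, which stays in $G$; hence $\gamma_\infty\in\mathfrak{c}$, forcing $\ell_g(\gamma_\infty)=L$, and $\gamma_\infty$ is nonconstant because $\mathfrak{c}$ is nontrivial (so in particular $L>0$). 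Finally, $\gamma_\infty$ is a smooth closed geodesic: viewing it on the circle $\mathbb{R}/\mathbb{Z}$, any sufficiently short subarc must coincide with the minimizing geodesic joining its endpoints — otherwise replacing that subarc by the geodesic strictly shortens $\gamma_\infty$ within $\mathfrak{c}$ — so $\gamma_\infty$ is locally geodesic everywhere, hence a smooth closed geodesic representing $\mathfrak{c}$.

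The main obstacle is the confinement step, and specifically the need to orchestrate all three hypotheses simultaneously: (P) prevents a short loop from running deep into the boundary collar, (i) guarantees that a topologically nontrivial loop confined to the collar cannot be small in Euclidean diameter (which is precisely what lets (P) bite), and completeness is what converts ``$g$-bounded and meeting a fixed compact set'' into ``contained in a compact subset of $G$''. Once the minimizing sequence is trapped in a compact set, Arzel\`a--Ascoli, semicontinuity of length, homotopy invariance under uniform limits, and the classical fact that a length-minimizer in a free homotopy class is a smooth closed geodesic are all routine.
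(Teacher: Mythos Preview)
The paper does not prove this statement: Theorem~\ref{Herbort} is quoted verbatim from Herbort~\cite[Theorem~1.1]{Herbort1983} and used as a black box to deduce Theorem~\ref{geodesic}. There is therefore no ``paper's own proof'' to compare against; you have supplied a proof where the authors chose simply to cite one.

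That said, your argument is correct and is essentially the classical direct-method proof that one would expect Herbort's original to follow. The confinement step is the heart of the matter and you have handled it properly: condition~(i) plus a Lebesgue-number argument forces any nontrivial loop trapped in the boundary collar to have Euclidean diameter bounded below, condition~(P) then converts this into a lower bound on $g$-length that contradicts the near-minimality of the sequence, and completeness (via Hopf--Rinow) upgrades ``meets a fixed compact set and has bounded $g$-length'' to ``is contained in a fixed compact subset of $G$''. The remaining steps --- constant-speed reparametrization, Arzel\`a--Ascoli on the compact set $K$ (where $g$ and the Euclidean metric are uniformly comparable, so the uniform $g$-Lipschitz bound gives equicontinuity), lower semicontinuity of length, stability of the free homotopy class under uniform limits inside $G$, and local length-minimality implying smoothness --- are all standard and correctly invoked. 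One cosmetic point: the theorem speaks of ``homotopy classes in $\pi_1(G)$'', and you correctly pass to the associated free homotopy class (conjugacy class), which is the right object for unbased closed geodesics; this is implicit in the statement and your treatment is the intended one.
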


\begin{proofof}{Theorem~\ref{geodesic}}
We will show that both the conditions in Theorem~\ref{Herbort} hold for $G=D$ and $g=ds^2_{\ti B, D}$. By the smoothness of $\pa D$, it is evident that condition (i) is satisfied. For condition (ii), note that from \eqref{B-KF} and the fact that $\text{Ric}_{B,D}(z,u)$ approaches $-1$ near the boundary of a strongly pseudoconvex domain (see \cite[Corollary~2]{Krantz-Yu}), there exists $C = C(D) > 0$ such that
\begin{align}\label{B-KF inequ}
\tau_{\ti B, D}(z,u)\geq C\tau_{B, D}(z,u)
\end{align}
for $z$ near the boundary of $D$ and unit vectors $u$. As both the Bergman and Kobayashi--Fuks metric are K\"ahler, this relation also holds for $z$ on any compact subset of $D$ and unit vectors $u$. Thus \eqref{B-KF inequ} holds for all $z \in D$ and $u \in \mathbb{C}^n$. This has the following two consequences. Firstly, since the Bergman metric dominates the Carath\'eodory metric on bounded domains (see Hahn \cite{Hahn}) and the Carath\'eodory metric is complete on smoothly bounded strongly pseudoconvex domains (see \cite{Jarn-Pflug-2013}, p.539), (\ref{B-KF inequ}) implies that the Kobayashi--Fuks metric on $D$ is complete. Secondly, as the Bergman metric on $D$ satisfies property (P) which was observed in the proof of Theorem~1.2 in \cite{Herbort1983}, (\ref{B-KF inequ}) also implies that the Kobayashi--Fuks metric on $D$ satisfies property (P) as well, and hence condition (ii) holds. This completes the proof of the theorem. \qed
\end{proofof}

\section{Some questions}
We conclude this article with the following questions:
\begin{itemize}
\item [(i)] Does the localization of holomorphic sectional curvature and Ricci curvature of the Kobayashi--Fuks metric near holomorphic peak points hold in dimensions $n\geq 2$?

\item [(ii)] Herbort studied the existence of geodesic spirals for the Bergman metric on strongly pseudoconvex domains \cite[Theorem~3.2]{Herbort1983}. Does the analog of this result hold for the Kobayashi--Fuks metric?

\item [(iii)] Is there an analog of the Donnelly-Fefferman's result on the $L^2$-cohomology of the Bergman metric \cite[Theorem~1.1]{Don-Fef} for the Kobayashi--Fuks metric? See also \cite{Don1, Don2} for simpler proofs based on the fact that the Bergman metric is given by a global potential. Note that the Kobayashi--Fuks metric is also given by a global potential.
\end{itemize}

\section*{References}
\bibliography{kfbib}

\end{document}